\tikzset{tab/.style={matrix of math nodes,column sep=-.35, row sep=-.35,text height=7pt,text width=7pt,align=center,inner sep=2,font=\footnotesize}}
\newcommand\numberthis{\addtocounter{equation}{1}\tag{\theequation}}
\newcommand{\dt}{\mathrm{dt}}
\newcommand{\dx}{\mathrm{dx}}
\newcommand{\dz}{\mathrm{dz}}
\renewcommand{\Re}{\operatorname{Re}}
\renewcommand{\Im}{\operatorname{Im}}
\newcommand{\Pin}{\operatorname{Pin}} 
\newcommand{\Sp}{Sp}
\newcommand{\SO}{SO}
\newcommand{\Or}{O}  
\newcommand{\GL}{GL}
\newcommand{\PP}{\mathbb{P}}
\newcommand{\ZZ}{\mathbb{Z}}
\newcommand{\CC}{\mathbb{C}}
\definecolor{darkred}{rgb}{0.7,0,0} 
\definecolor{UQgold}{RGB}{196, 158, 54} 
\definecolor{UQpurple}{RGB}{73, 7, 94} 
\lstdefinelanguage{Sage}[]{Python}
{morekeywords={False,sage,True},sensitive=true}
\definecolor{dblackcolor}{rgb}{0.0,0.0,0.0}
\definecolor{dbluecolor}{rgb}{0.01,0.02,0.7}
\definecolor{dgreencolor}{rgb}{0.2,0.4,0.0}
\definecolor{dgraycolor}{rgb}{0.30,0.3,0.30}
\theoremstyle{plain}
\newtheorem{thm}{Theorem}
\newtheorem{lemma}{Lemma}
\newtheorem{prop}[thm]{Proposition}
\newtheorem{cor}{Corollary}
\theoremstyle{definition}
\newtheorem{remark}{Remark}
\numberwithin{equation}{section}
\begin{document}
\title[Fluctuations for symplectic groups]{Fluctuations of Young diagrams for symplectic groups and semiclassical orthogonal polynomials}

\author[1,2]{\fnm{Anton} \sur{Nazarov}}\email{antonnaz@gmail.com}
\equalcont{These authors contributed equally to this work.}

\author*[3]{\fnm{Anton} \sur{Selemenchuk}}\email{selemenchyk@icloud.com}
\equalcont{These authors contributed equally to this work.}

\affil[1]{\orgdiv{Department of High Energy and Elementary
    Particle Physics}, \orgname{St.\ Petersburg State University}, \orgaddress{\street{University Embankment, 7/9}, \city{St.\ Petersburg}, \postcode{199034}, \country{Russia}}}

\affil[2]{\orgname{Beijing Institute of Mathematical Sciences and Applications (BIMSA)}, \orgaddress{\street{Street}, \city{Bejing}, \postcode{101408}, \state{State}, \country{People’s Republic of China}}}

\affil*[3]{\orgdiv{Faculty of Mathematics and Computer Science}, \orgname{St.\ Petersburg State University}, \orgaddress{\street{University Embankment, 7/9}, \city{St.\ Petersburg}, \postcode{199034}, \country{Russia}}}

\abstract{
  Consider an $n\times k$ matrix of i.i.d. Bernoulli random numbers
  with $p=1/2$. The dual RSK algorithm gives a bijection of this matrix
  to a pair of Young tableaux of conjugate shape, which is
  manifestation of skew Howe $\GL_{n}\times \GL_{k}$-duality.
  Thus the probability measure on zero-ones matrix leads to the probability
  measure on Young diagrams proportional to the ratio of the dimension of
  $\GL_{n}\times \GL_{k}$-representation and the dimension of the
  exterior algebra $\bigwedge\left(\CC^{n}\otimes\CC^{k}\right)$.

  Similarly, by applying Proctor's algorithm based on Berele's
  modification of the Schensted insertion, we get skew Howe duality
  for the pairs of groups $\Sp_{2n}\times \Sp_{2k}$. In the limit when
  $n,k\to\infty$ $\GL$-case is relatively easily studied by use of
  free-fermionic representation for the correlation kernel. But for
  the symplectic groups there is no convenient free-fermionic
  representation. We use Christoffel transformation to obtain the
  semiclassical orthogonal polynomials for $\Sp_{2n}\times \Sp_{2k}$
  from Krawtchouk polynomials that describe $\GL_{2n}\times\GL_{2k}$
  case. We derive an integral representation for semiclassical
  polynomials. The study of the asymptotic of this integral
  representation gives us the description of the limit shapes
  and fluctuations of the random Young diagrams for symplectic groups.
}


\keywords{Young diagrams, Symplectic groups, Skew Howe duality, Semiclassical orthogonal polynomials, Christoffel transformation, Krawtchouk polynomials, Correlation kernel and sine kernel limit, Asymptotic analysis, limit shape and fluctuations}
\pacs[MSC Classification]{33C45, 60G55, 22E46, 60B10}
\maketitle

\tableofcontents
\section*{Introduction}
\label{sec:introduction}
Consider an $n\times k$ matrix of i.i.d. Bernoulli random numbers with
$p=1/2$. Dual Robinson-Schensted-Knuth algorithm gives a
bijection of this matrix to a pair of semistandard Young tableaux of
conjugate shape \cite{knuth1970permutations}, which is manifestation
of skew Howe $\GL_{n}\times \GL_{k}$-duality \cite{R_Howe}. Thus the
probability measure on zero-ones matrices leads to the probability
measure on Young diagrams proportional to the ratio of the dimension
of $\GL_{n}\times \GL_{k}$-representation and the dimension of the
exterior algebra $\bigwedge\left(\CC^{n}\otimes\CC^{k}\right)$. If $n$
and $k$ go to infinity with the same rate these random diagrams in French notation,
rotated by $\frac{\pi}{4}$ and scaled by a factor $\frac{1}{n}$,
converge uniformly in probability to a smooth limit shape supported on
an interval. This limit shape and fluctuations around it were
considered in \cite{GTW01,GTW02,GTW02II}.

Random diagram can be seen as a random point configuration on
a lattice, where a point is put in the left boundary of any unit
interval, where the upper boundary of diagram decays. That
is if diagram $\lambda$ has row lengths
$(\lambda_{1},\dots,\lambda_{n})$ the coordinates of the points would
be $\{\lambda_{i}+n-i\}_{i=1}^{n}$. Using Weyl dimension formula for
irreducible representations of general linear groups it is possible to
present this random point ensemble as orthogonal polynomial ensemble
with Krawtchouk polynomials. The asymptotic behavior of Krawtchouk
polynomials was studied extensively through the years, see
\cite{ISMAIL1998121,BKMM03,LI2000155,Wang_Qui_2015} and references
therein. The corresponding determinatal ensemble was considered in
\cite{johansson2002non,Borodin_2003,borodin2007asymptotics,Borodin_2017}. 

Skew Howe duality for other pairs of classical Lie groups can be
obtained by certain modifications of the insertion algorithm
\cite{proctor1993reflection}. For example, by applying Proctor's
algorithm \cite{proctor1993reflection} based on Berele's modification
\cite{berele1986schensted} of the Schensted insertion
\cite{schensted1961longest}, we get skew Howe duality for the pairs of
groups $\Sp_{2n}\times \Sp_{2k}$. The algorithm that proves
$\SO_{2n+1}\times \Pin_{2k}$ duality was proposed by Benkart and
Stroomer \cite{benkart1991tableaux}, and for $\Or_{2n}\times \SO_{2k}$
by Okada \cite{okada1993robinson} and Terada \cite{Terada93}. All
these algorithms give the bijections between zero-one matrices
enumerating basis elements in an exterior algebra and pairs of
corresponding modifications of semistandard Young tableaux for
classical Lie groups, such as King tableaux for symplectic groups
\cite{King76} and or Sundaram tableaux for orthogonal groups
\cite{Sundaram90}. The dualities lead to the multiplicity-free
decompositions of the exterior algebra
$\bigwedge\left(\CC^{2n}\otimes\CC^{k}\right)$ into the direct sums of
irreducible representations of dual groups pairs. The decompositions
give rise to the probability measures on (symplectic or orthogonal)
Young diagrams $\lambda$ given by the ratio of dimension of the irreducible
component to the dimension of the whole exterior algebra. It is possible to write these measures explicitly, as was demonstrated
in \cite{nazarov2024skew}. 

Similarly to $\GL_{n}\times \GL_{k}$-case if we rotate and scale the
diagram and take the limit $n,k\to\infty$ with the same rate, we get
the uniform convergence to the limit shape. Moreover, the limiting
curve is ``half'' of $\GL_{2n}\times\GL_{2k}$ curve and is described
by the same explicit formula, as demonstrated in
\cite{nazarov2024skew} by using the dimension formulas and solving the
variational problems. The study of the fluctuations in these cases
remained an open question. 

In the present paper we consider the $\Sp_{2n}\times \Sp_{2k}$-skew Howe duality. To write the measure explicitly introduce the coordinates
$a_{i}=\lambda_{i} + n - i + 1$, then as explained in the Section \ref{sec:rand-diagr-sympl}, we get
\begin{equation}\label{measure_int}
  \mu_{n,k}(\left\{a_{i}\right\})=\frac{1}{Z_{n,k}}\prod_{i<j}(a_{i}^{2}-a_{j}^{2})^{2}
  \prod_{\ell=1}^{n}a_{\ell}^{2}\binom{2n+2k}{n+k+a_{\ell}}=\frac{1}{Z_{n,k}}\det\left[\mathcal{K}\left(\frac{a_{i}}{n},\frac{a_{j}}{n}\right)\right]_{i,j=1}^{n},
\end{equation}
where $Z_{n,k}$ is the normalization constant, given in \eqref{normalization}. The
measure contains a
Vandermonde determinant written in terms of squares of variables
$a_{i}$. Therefore it can be expressed as the determinant of the Cristoffel-Darboux kernel $\mathcal{K}\left(u,v\right)$ for certain orthogonal polynomials. For brevity we call these polynomials \emph{symplectic} and they are new in this context.

As demonstrated in \cite{NNP20}, symplectic polynomials are obtained from
Krawtchouk polynomials by the ``lifting'' procedure from
\cite{BUHMANN1992117} which is a variant of the QR-algorithm
for the Christoffel transformation of orthognal
polynomials~\cite{galant1971implementation},~\cite{bueno2004darboux}.
The polynomials are therefore discrete semiclassical orthogonal
polynomials \cite{van_assche_2017} and their asymptotic behavior was
not considered previously. We study various asymptotic regimes for the
orthogonal polynomials and obtain the description of local
fluctuations for random symplectic Young diagrams in the bulk regime, given by the next theorem. 
\begin{thm}[Sine Kernel Limit]
\label{thm:sine-limit}
Let $\mu_{n,k}$ be the measure corresponding to the $\Sp_{2n}\times \Sp_{2k}$ skew Howe duality \eqref{measure_int}, and let $\mathcal{K}(u,v)$ be the associated correlation kernel defined above. Define
\[
K= 2n+2k,\quad \text{and assume } n=HK \text{ with fixed } 0<H<\frac{1}{2}.
\]
Then, in the bulk regime, that is deifned as
\[
u=\frac{1}{H}\Bigl(x+\frac{i}{K}\Bigr),\quad v=\frac{1}{H}\Bigl(x+\frac{j}{K}\Bigr), \quad i,j\in \mathbb{Z},\quad x\in (-1/2,1/2),\quad K\to \infty,
\]
the kernel converges pointwisely to the discrete sine kernel:
\[
\lim\limits_{n\to \infty}\frac{\mathcal{K}\left(\frac{1}{H}\big(x+\frac{i}{K}\big),\frac{1}{H}\big(x+\frac{j}{K}\big)\right)}{\mathcal{K}(x/H,x/H)}=\frac{\sin (\pi \rho(x)(i-j))}{\pi \rho(x)(i-j)},
\]
and the limit density is explicitly given by
\[
\rho(x)= \frac{1}{\pi}\cos^{-1}\!\Biggl(\frac{1-4H}{\sqrt{1-4x^2}}\Biggr).
\]
\end{thm}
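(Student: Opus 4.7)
The plan is to combine the integral representation of the symplectic polynomials $g_m$ developed earlier in the paper (obtained as the Christoffel lift of the classical contour integral for the Krawtchouk polynomials) with a steepest descent analysis in the double-scaling regime $K = 2n+2k \to \infty$, $n = HK$ with $0<H<\tfrac{1}{2}$ fixed. Since Theorem~\ref{thm:projection} provides both the monic--orthonormal conversion $g_m = G_m/\sqrt{\Lambda_m}$ and the closed-form recurrence coefficients $\tilde{a}_k,\tilde{b}_k$ at $p=1/2$, the weighted polynomial $g_{2n}(u)\sqrt{\widetilde{W}(u^2)}$ admits a contour integral of the schematic form $\oint \exp\bigl(K\,S(z;x)\bigr)\,R(z;x)\,dz$, where the phase $S$ is the Krawtchouk phase plus a logarithmic correction coming from the factor $x^2$ in $\widetilde{W}$.

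First I would analyse the saddle point equation $\partial_z S(z;x)=0$; in the bulk this algebraic equation has two complex conjugate roots $z_\pm(x)=r(x)e^{\pm i\pi\rho(x)}$. Directly solving and extracting the argument should produce the density
\[
\rho(x)=\frac{1}{\pi}\cos^{-1}\!\Biggl(\frac{1-4H}{\sqrt{1-4x^2}}\Biggr),
\]
with the bulk characterised precisely by the condition that both saddles be non-real. A standard two-saddle steepest descent contribution then yields, uniformly for bounded $i\in\ZZ$,
\[
g_{2n}\!\Bigl(\tfrac{1}{H}(x+\tfrac{i}{K})\Bigr)\sqrt{\widetilde{W}\!\Bigl(\tfrac{1}{H^2}(x+\tfrac{i}{K})^{2}\Bigr)} = A(x)\cos\!\bigl(K\Phi(x)+\pi\rho(x)\,i+\theta(x)\bigr) + o(1),
\]
together with an analogous expansion for $g_{2n-2}$ with the same $A,\Phi,\rho$ and a constant phase $\theta'$ differing from $\theta$ by $2\pi\rho(x)$ up to $o(1)$ corrections.

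Next I plug these oscillatory asymptotics into the Christoffel--Darboux formula. The denominator expands as $u^2-v^2=\tfrac{2x}{H^2 K}(i-j)+O(K^{-2})$, while product-to-sum identities applied to $g_{2n}(u)g_{2n-2}(v)-g_{2n-2}(u)g_{2n}(v)$ cancel the fast-oscillating factor $\cos(2K\Phi(x)+\dots)$ and leave a term proportional to $\sin\bigl(\pi\rho(x)(i-j)\bigr)$. The diagonal value $\mathcal{K}(x/H,x/H)$ is recovered by taking $i=j$ via l'H\^opital's rule applied to the same expression, and combining it with the leading coefficient ratio $\varkappa_{2n-2}/\varkappa_{2n}$ coming from the recurrence (Theorem~\ref{thm:projection}) supplies exactly the normalisation needed for the quotient to converge to $\sin\bigl(\pi\rho(x)(i-j)\bigr)/\bigl(\pi\rho(x)(i-j)\bigr)$.

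The principal obstacle is controlling the saddle point expansion uniformly in the lattice shift $i/K$ with $o(1)$ error, and identifying the two constant phases $\theta,\theta'$ of $g_{2n}$ and $g_{2n-2}$ precisely enough that their difference equals the required $2\pi\rho(x)$ modulo $o(1)$. This is where the exact solvability of the three-term recurrence at $p=1/2$ from Theorem~\ref{thm:projection} is essential: it pins down both the amplitude $A(x)$ and the inter-polynomial phase shift between $g_{2n}$ and $g_{2n-2}$ to leading order, so that the subtraction in the Christoffel--Darboux formula yields a clean sine kernel rather than a distorted trigonometric expression with residual non-cancelling terms.
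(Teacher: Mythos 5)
Your proposal is correct and follows essentially the same route as the paper: a contour-integral representation inherited from the Krawtchouk polynomials, steepest descent with two complex-conjugate saddles whose argument yields $\rho(x)$, oscillatory asymptotics for $g_{2n}$ and $g_{2n-2}$, and product-to-sum cancellation in the Christoffel--Darboux numerator (the paper implements this by expanding $\widetilde{K}_{2n+2}$ and $\widetilde{K}_{2n}$ separately and combining them via the explicit two-term Christoffel formula \eqref{eq:orthonormal-BC-Christoffel-transform-explicit}, rather than via a single lifted integral). One small correction: the factor $u^2$ in $\widetilde{W}(u^2)$ sits outside the integration variable and the Christoffel lift only modifies the $O(1)$ prefactor of the integrand, so there is no ``logarithmic correction'' to the phase --- the saddle points, and hence $\rho(x)$, are exactly those of the Krawtchouk phase, as your final formula already reflects.
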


Theorem~\ref{thm:sine-limit} follows from a steepest descent analysis of the relevant integral representations for the polynomials and shows that the local fluctuations converge to universal sine kernel statistics; the density $\rho(x)$ is consistent with the limit shape analysis in \cite{nazarov2024skew}.

The paper is organized as follows. In Section
\ref{sec:rand-diagr-sympl} we introduce the probability measure on
Young diagrams, discuss sampling of random diagrams and recall the
definition of the correlation kernel. In Section
\ref{sec:family-semicl-orth} we write the correlation kernel in terms
of the orthogonal polynomials that are obtained from the Krawtchouk
polynomials by Christoffel transformation. Then in Section
\ref{sec:asympt-polyn} we use the integral representation for the
polynomials to study their asymptotic behavior. These results are used
to prove the convergence of the correlation kernel to the discrete
sine kernel in Section \ref{sec:limit-behav-corr}. We discuss possible
further results in Conclusion.

\section{Random diagrams for symplectic groups}
\label{sec:rand-diagr-sympl}
Consider the symplectic group $\Sp_{2n}$ and its first fundamental
representation on $\CC^{2n}$. The exterior algebra
$\bigwedge\left(\CC^{2n}\right)$ is a space of a reducible representation of $\Sp_{2n}$. The exterior algebra
$\bigwedge\left(\CC^{2n}\otimes\CC^{k}\right)$ can be seen as $k$-th
tensor power of this representation
$\bigwedge\left(\CC^{2n}\otimes\CC^{k}\right)=
\left(\bigwedge\left(\CC^{2n}\right)\right)^{\otimes k}$ and therefore
has a natural action of $\Sp_{2n}$. On the other hand the same can be
done for $\Sp_{2k}$, moreover, the actions of $\Sp_{2n}$ and
$\Sp_{2k}$ commute on $\bigwedge\left(\CC^{2n}\otimes\CC^{k}\right)$
and we have a strongly multiplicity-free decomposition into
irreducible representations of $\Sp_{2n}\times \Sp_{2k}$~\cite{cheng2012dualities},\cite{goodman2009symmetry} 
\begin{equation}
  \label{eq:sp-howe-duality}
  \bigwedge\left(\CC^{2n}\otimes\CC^{k}\right)=
  \bigoplus_{\lambda\subset k^{n}} V_{\Sp_{2n}}(\lambda)\otimes
  V_{\Sp_{2k}}(\overline\lambda'). 
\end{equation}
Here $\lambda$ is a Young diagram that is confined inside of
$n\times k$ rectangle, $\overline\lambda'$ is its transposed
complement in the rectangle and by $V_{\Sp_{2n}}(\lambda)$ we denote
the irreducible representation corresponding to the diagram $\lambda$.
This is known as skew $(\Sp_{2n},\Sp_{2k})$ Howe duality
\cite{R_Howe}, \cite{howe1995perspectives}. Combinatorially this
duality can be understood in terms of King tableaux that generalize
semistandard Young tableaux to the symplectic group case.

Let us briefly recall that King tableau \cite{King76} for the
symplectic group $\Sp_{2n}$ is Young diagram $\lambda$ filled in a semistandard way with signed
numbers that have order $1<\bar 1<2<\bar 2<\dots<n<\bar n$. In
English convention it means
that numbers are strictly increasing along the columns and
non-decreasing along the rows. An additional condition is imposed --
all the numbers that are $\leq \bar i$ should not appear below row
number $i$. King tableaux of shape $\lambda$ enumerate the basis
vectors in the irreducible representation $V_{\Sp_{2n}}(\lambda)$.

Consider the space $\CC^{n}$ with the basis vectors
$\{e_{i}\}_{i=1}^{n}$. The space $\bigwedge\left(C^{n}\right)$ is
$2^{n}$-dimensional and its basis vectors
$\{e_{i_{1}}\wedge e_{i_{2}}\wedge \dots\}$ are in one-to-one
correspondence to the columns of zeros and ones that encode which
vectors appear in the exterior product. Therefore the basis vector of the space
$\bigwedge\left(\CC^{2n}\otimes\CC^{k}\right) =
\left(\bigwedge\CC^{n}\right)^{\otimes 2k}$ corresponds to the
$n\times 2k$ zero-one matrix of $2k$ columns.

The following algorithm based upon Berele insertion
\cite{berele1986schensted} was proposed by Proctor in
\cite{proctor1993reflection}. It establishes the correspondence between
 zero-one matrices of size $n\times 2k$, that encode the basis vectors in the left hand side
of the decomposition \eqref{eq:sp-howe-duality}, and pairs of King
tableaux $(P,Q)$ of complementary shapes $(\lambda,\overline\lambda')$
inside of $n\times k$ rectangle, that encode the basis vectors in the
right hand side of the decomposition.
\begin{itemize}
\item First, we pair the columns of the
zero-one matrix. On each step of the algorithm we consider the pair
of columns, so total number of algorithm steps is $k$.
\item We start with the empty tableaux $P$ and $Q$. On the first step
  we fill the $n\times 1$ rectangle with numbers and adjoin some of
  the boxes to the tableau $P$ and remaining to the tableau $Q$ as
  explained below.
\item After the step $r-1$ the shapes of $P,Q$ form the
  $n\times (r-1)$ rectangle. We start the step $r$ by shifting the
  tableau $Q$ to the right by one box, so that there appears an empty
  strip between $P$ and $Q$ in $n\times r$ rectangle.
\item From two consecutive columns number $2r, 2r+1$ we produce a sequence
  of numbers to insert on the step $r$ by reading the columns bottom-up as follows.
\item If in row number $i$ we have $(0,0)$ in the columns, we produce the number $i$,
  for $(0,1)$ we insert nothing, for $(1,0)$ we insert $\bar i,i$ and
  for $(1,1)$ we insert $\bar i$.
\item Then we insert numbers one-by-one by using Schensted insertion
  \cite{schensted1961longest} into tableau $P$. If the additional
  condition is broken by an insertion of number $i$ that is bumping
  number $\bar i$ below the row $i$, we need to erase the box where
  $\bar i$ was. Then we shift this erased box to the boundary as in jeu
  de taquin (to preserve the semi-standard condition).
\item After the insertion of all the numbers at step $r$, the erased
  boxes are filled with numbers $\bar r$ and adjoined to $Q$.
\item All the remaining empty boxes between the extended tableaux $P$
  and $Q$ in $n\times r$-rectangle are filled with $r$ and adjoined to
  $Q$ as well. Then we proceed to the step number $(r+1)$.
\end{itemize}

Besides the combinatorial proof of the decomposition
\eqref{eq:sp-howe-duality}, Proctor's algorithm gives us an effective
way to sample random Young diagrams by generating random zero-one
matrices with some measure and then pushing this measure to Young
diagrams.

By considering the dimension of the components in the decomposition
\eqref{eq:sp-howe-duality}, we get the identity
\begin{equation*}
2^{2nk}=\sum_{\lambda\subset k^{n}} \dim V_{\Sp_{2n}}(\lambda)\cdot
\dim V_{\Sp_{2k}}(\overline\lambda'), 
\end{equation*}
which can be used to introduce the probability measure on the diagrams
that appear in the decomposition. The probability of a diagram is the
ratio of the dimension of the irreducible
$\Sp_{2n}\times\Sp_{2k}$-representation corresponding to the diagram
to the dimension of the whole exterior algebra, this measure
corresponds to the uniform measure on zero-one matrices
\begin{equation}
  \label{eq:probability-measure}
  \mu_{n,k}(\lambda)=2^{-2nk} \dim V_{\Sp_{2n}}(\lambda)\cdot
  \dim V_{\Sp_{2k}}(\overline\lambda'). 
\end{equation}
It is possible to write this measure explicitly, as was demonstrated
in \cite{nazarov2024skew}. To do so introduce the coordinates
$a_{i}=\lambda_{i} + n - i + 1$. These coordinates can be interpreted
as the coordinates of unit intervals where the upper boundary of Young
diagram $\lambda$, rotated by $\frac{\pi}{4}$ (``Russian
convention''), is decreasing. The diagram then can be identified with
a particle configuration $\left\{a_{i}\right\}$ on the integer
lattice. For $\dim V_{\Sp_{2n}}(\lambda)$ we can use Weyl dimension
formula. The problem is to write the dimension of
$V_{\Sp_{2k}}(\overline\lambda')$ in terms of the coordinates
$\{a_{i}\}$, which are introduced for the diagram $\lambda$. It can be
done by representing King tableau as a set of non-intersecting paths,
then switching to the dual paths and using
Lindstr\"om--Gessel--Viennot lemma to express the number of
non-intersecting paths configurations as a determinant
\cite{oh2019identities}. Then Dodgson condensation leads to the
explicit formula
\begin{align}
  \notag
  &\mu_{n,k}(\left\{a_{i}\right\})=\\  \label{eq:probability-measure-explicit-formula}
  &=\frac{2^{2n(1-k)}}{\prod_{i<j}(j-i)(2n+2-i-j)}
   \prod_{i<j}(a_{i}^{2}-a_{j}^{2})^{2}
  \prod_{\ell=1}^{n} a_{\ell}^{2} \frac{(2k-1+2\ell)!}{(k+n+a_{\ell})!(k+n-a_{\ell})!}.
\end{align}

From this derivation the measure is determinantal, and the associated point process is therefore a determinantal point process (see \cite{borodin2009determinantalpointprocesses} and references therein). Consequently, the measure can be written as determinant of the correlation kernel,
\begin{equation}
  \label{eq:measure-as-determinant-of-correlation-kernel}
  \mu_{n,k}(\{a_{i}\})=\det\big[\mathcal{K}(a_{i},a_{j})\big]_{i,j=1}^{n}.
\end{equation}
In particular the density of particles is just a one-point correlation
function
\begin{equation}
  \label{eq:density-definition}
  \rho(a)=\mathcal{K}(a,a).
\end{equation}
We are interested in the limit behavior of the probability measure
when $n$ and $k$ go to infinity with the same rate. We assume that the
size of the box of the diagram goes to zero with the same rate by
introducing the coordinate $u=\frac{a}{n}$.

In \cite{nazarov2024skew} we have demonstrated that random Young
diagrams, rotated by $\frac{\pi}{4}$ and considered as 1-Lipschitz functions,
converge uniformly in probability to the limit curve $\Omega(u)$ that
has an explicit formula for the limit density
$\PP(\sup_{u}|\lambda(u)-\Omega(u)|>\varepsilon )\to 0$ as
$n\to\infty$. In \cite{NNP20} the same was demonstrated for a very
similar problem in $(\SO_{2n+1},\Pin_{2k})$ skew Howe duality. To do
so we have written the measure in the exponential form, used Stirling
approximation for the factorials and solved the variational problem.
But we have not considered the local fluctuations around the limit
shape. Nevertheless in \cite{NNP20} in a similar problem we have
demonstrated that the probability measure can be written in terms of a
semiclassical family of orthogonal polynomials. This representation
was then used to prove Central Limit Theorem for the global
fluctuations. Below we use the polynomials to write the
correlation kernel in the Christoffel-Darboux form and demonstrate its
convergence to the discrete sine kernel in the bulk by analyzing the
asymptotic behavior of the orthogonal polynomials.

In our case the
measure~\eqref{eq:probability-measure-explicit-formula} contains a
Vandermonde determinant written in terms of squares of variables
$a_{i}$. Introduce the variables $y_{i}=\frac{a_{i}^{2}}{n^{2}}$, then measure takes the determinantal form
\begin{equation}
 \label{eq:measure-determinantal}
  \mu_{n,k}(y_{1},\dots,y_{n})=\frac{1}{Z_{n,k}}\prod_{i<j}(y_{i}-y_{j})^{2}\cdot
  \prod_{\ell=1}^{n} y_{\ell} \binom{2n+2k}{n+k-n\sqrt{y_{\ell}}},
\end{equation}
where we have rewritten the weight in the binomial form and collected
the terms that do not depend on $y_{i}$ into the normalization
constant:
 \begin{align}\label{normalization}
 	Z_{n,k}^{-1}=\frac{2^{2n(1-k)}n^{2n^2}}{\prod\limits_{i<j}^n(j-i)(2n+2-i-j)}\prod\limits_{l=1}^n\frac{(2k-1+2l)!}{(2k+2n)!}
 \end{align}
  Denote the weight by $\widetilde{W}(y)$
\begin{equation}
  \label{eq:weight-for-square-lattice}
  \widetilde{W}(y)=y \;\binom{2n+2k}{n+k-n\sqrt{y}},
\end{equation}
and introduce the discrete orthonormal polynomials
$p_{m}(y)$ on a quadratic lattice\\ $\left\{\frac{i^{2}}{n^{2}}| i=0,1,\dots,n+k\right\}$
\begin{equation}
  \label{eq:orthogonality-of-polynomials-on-square-lattice}
  \sum_{i=0}^{n+k} p_{m}\left(\frac{i^{2}}{n^{2}}\right)
  p_{\ell}\left(\frac{i^{2}}{n^{2}}\right)
  \widetilde{W}\left(\frac{i^{2}}{n^{2}}\right) = \delta_{m,\ell}. 
\end{equation}
According to (\ref{eq:weight-for-square-lattice}), the weight $\widetilde{W}(u^2)$
is the binomial coefficient multiplied by $u^2$. The polynomials, that
are orthogonal with the binomial weight are the Krawtchouk polynomials.
The polynomials $p_m$ are related to the Krawtchouk polynomials using
the ``lifting'' procedure from \cite{BUHMANN1992117} which is a
variant of the QR-algorithm for the Christoffel transformation of
orthognal
polynomials~\cite{galant1971implementation},~\cite{bueno2004darboux}
as demonstrated below.

Introduce Christoffel-Darboux correlation kernel as
\begin{equation}
  \label{eq:christoffel-darboux-kernel-on-square-lattice}
\mathcal{K}(y_{i},y_{j})=\sqrt{\widetilde{W}(y_{i}) \widetilde{W}(y_{j})}
\sum_{\ell=0}^{n-1}p_{\ell}(y_{i})\; p_{\ell}(y_{j})  ,
\end{equation}
the measure is then rewritten as its determinant 
\begin{align}
  \notag
    \mu_{n,k}(y_{1},\dots,y_{n})=&\frac{1}{Z_{n,k}}\det\left[\mathcal{K}(y_{i},y_{j})\right]_{i,j=1}^{n}=\\ \label{eq:mu-as-christoffel-darboux}
    =&\frac{1}{Z_{n,k}}\det\left(\sqrt{\widetilde{W}(y_{i}) \widetilde{W}(y_{j})}
      \sum_{l=0}^{n-1}p_{l}(y_{i})\;
      p_{l}(y_{j})\right)_{i,j=1}^{n}.
\end{align}
We are interested in the asymptotic behavior of the probability
measure as $n,k$ go to infinity with the same rate. Therefore we need
to study the asymptotics of the polynomials $\{p_{m}\}$. First note, that
the polynomials $\{p_{m}(y)\}$ with $y=u^{2}$ are just even polynomials
from the set of polynomials $\{g_{m}(u)\}$ that are
orthonormal with respect to the weight $\widetilde{W}(u^{2})$
\begin{equation}
  \label{eq:orthogonality-of-polynomials-on-uniform-lattice}
  \sum_{i=-(n+k)}^{n+k} g_{m}\left(\frac{i}{n}\right)
  g_{\ell}\left(\frac{i}{n}\right)
  \widetilde{W}\left(\frac{i^{2}}{n^{2}}\right) = \delta_{m,\ell}.   
\end{equation}
Since weight $\widetilde{W}(u^{2})$ is even as a function of $u$ and since
each value of $y$ appears in \eqref{eq:orthogonality-of-polynomials-on-uniform-lattice} twice, we have

\begin{equation}
  \label{eq:123}
  p_{m}\left(u^{2}\right)=\sqrt{2}g_{2m}(u), 
\end{equation}
for all m we can write the kernel in terms of the polynomials $\{g_{2m}\}$ using Christoffel-Darboux formula as
\begin{equation}
  \label{eq:correlation-kernel-on-uniform-lattice}
  \mathcal{K}(u,v)=2\frac{\varkappa_{2n-2}}{\varkappa_{2n}}
  \frac{g_{2n}(u)g_{2n-2}(v)-g_{2n-2}(u)g_{2n}(v)}{u^{2}-v^{2}}
  \sqrt{\widetilde{W}(u^{2}) \widetilde{W}(v^{2})},
\end{equation}
where $\varkappa_{m}$ is the coefficient of the $m$-th power
of $u$ in $g_{m}(u)$, with variables $u,v$ living on a
linear lattice
$\left\{-\frac{n+k}{n},-\frac{n+k-1}{n},\dots,\frac{n+k-1}{n},\frac{n+k}{n}\right\}$.

We study the asymptotics $n,k\to\infty$ such that their ratio
tends to constant. Introduce $K=2n+2k$, and $H$ such that $n=HK$, then
we are interested in the limit $K\to\infty$ with $H$ being constant.
Bulk asymptotic regime corresponds to
$u=\frac{1}{H}\left(x+\frac{i'}{n}\right)$,
$v=\frac{1}{H}\left(x+\frac{j'}{n}\right)$ with $i',j'\in\ZZ$. In the
next Section we express the polynomials in correlation kernel in terms
of Krawtchouk polynomials and then study their asymptotic behavior. 

\section{A family of semiclassical orthogonal polynomials}
\label{sec:family-semicl-orth}
\subsection{Symplectic polynomials}~\\
Non-normalised Krawtchouk polynomials $\{K_m\}$ satisfy the orthogonality relation
\begin{align*}
\sum\limits_{a=0}^{K}\binom{K}{a}p^a(1-p)^{K-a}K_m(a,p,K)K_n(a,p,K)=\binom{K}{n}^{-1}\left(\frac{1-p}{p}\right)^n \delta_{nm},
\end{align*}
where $p\in(0,1)$ and we use the following definition of the Pochhammer symbol
\begin{align*}
    (-K)_m=\prod\limits_{i=0}^{m}(-K+i-1)=(-1)^m m!\binom{K}{m}.
\end{align*}
There is a well known representation of the Krawtchouk polynomials as hypergeometric functions~\cite{Koekoek}
\begin{align}\label{krawtchouk-hyp-rep}
    K_{m}(a,p,K)=\,_2F_1\left(-m,-a,-K; \frac{1}{p}\right).
\end{align}
The monic version of the $K_m(a,p,K)$ is defined as (9.11.4 in \cite{Koekoek})
\begin{align}\label{eq:monic-Krawtchouk-rel}
   P_m(a,p,K)=(-K)_m p^m K_{m}(a,p,K).
\end{align}
So the monic Krawtchouk polynomial $P_m(a,p,K)$ now has the following hypergeometric representation (from the (\ref{krawtchouk-hyp-rep}))
\begin{align}\notag
    P_m(a,p,K)&=(-1)^m m!p^m\binom{K}{m} K_m(a,p,K)=\\ \label{monic-krawtchouk-hyp-rep}
    &=(-1)^m m!p^m\binom{K}{m}\,_2F_1\left(-m,-a,-K; \frac{1}{p}\right)
\end{align}
To arrive to the polynomials $g_m(u)$ we first shift the $a$ variable to $a\to K/2+nu$, where the $u$ is the new variable on the symmetric lattice. Denote the monic polynomial with rescaled variable by $\tilde{K}_m(u)\equiv\tilde{K}_m(u,p,K)$. We immediately find their hypergeometric representation
\begin{align*}
    \widetilde{K}_m(u)&=\frac{1}{n^m}P_m(K/2+nu,p,K)=(-1)^m\frac{p^m}{n^m}m!\binom{K}{m}K_m(K/2+nu,p,K)=\\
    &=(-1)^m\frac{p^m}{n^m}m!\binom{K}{m}\,_2F_1\left(-m,-K/2-nu,-K; \frac{1}{p}\right).\numberthis\label{shifted-monic-krawtchouk-hyp-rep}
\end{align*}
 The orthogonality relation for polynomials $\widetilde{K}_m$ is
\begin{align}\label{MSK_orth}
    \sum\limits_{i=-K/2}^{K/2}\widetilde{K}_l(i/n)\widetilde{K}_m(i/n)W(i/n)=n^{-2m}\kappa^{-2}_m\delta_{lm},
\end{align}
where $\kappa_{m}=\frac{(-1)^m}{m!}\binom{K}{m}^{-1/2}(p(1-p))^{-m/2}$,
and we have the relation between the weights $W$ for Krawtchouk polynomials for the variable on the symmetric lattice $u\in \{-\frac{K}{2n},-\frac{K}{2n}+1/n,...,\frac{K}{2n}\}$ and weight $\widetilde{W}$ for $\{g_m(u)\}$,
\begin{align}\label{eq:Weitght_connection}
	u^2 W(u)=u^2\binom{K}{K/2+nu}p^{K/2-nu}(1-p)^{K/2+nu}=\widetilde{W}(u^2).
\end{align}
The
weight $\widetilde{W}(u^2)$, defined by (\ref{eq:weight-for-square-lattice}) differs from the weight
of Krawthouk polynomials $W$ with $K=2n+2k$ only by a factor
$u^{2}$.
\begin{prop}[Christoffel transformation, see \cite{Ismail_2005}, sect. 2.7, Theorem 2.7.1]
	Monic orthogonal polynomials $G_m$
are expressed in terms of the polynomials
\eqref{shifted-monic-krawtchouk-hyp-rep} by the 
formula 
\begin{equation}
  \label{eq:monic-BC-Christoffel-transform}
  G_{m}(u)=\frac{1}{C_{m,2}u^{2}}\left|
  \begin{matrix}
    \widetilde{K}_{m}(0)&     \widetilde{K}_{m+1}(0) &
    \widetilde{K}_{m+2}(0)\\
    \left(\widetilde{K}_{m}\right)'(0)&     \left(\widetilde{K}_{m+1}\right)'(0) &
    \left(\widetilde{K}_{m+2}\right)'(0)\\
    \widetilde{K}_{m}(u)&     \widetilde{K}_{m+1}(u) &
    \widetilde{K}_{m+2}(u)
  \end{matrix}
  \right|,
\end{equation}
where
\begin{equation}
  \label{eq:cn-in-christoffel}
   C_{m,2}=\left|
    \begin{matrix}
      \widetilde{K}_{m}(0)&     \widetilde{K}_{m+1}(0) \\
    \left(\widetilde{K}_{m}\right)'(0)&     \left(\widetilde{K}_{m+1}\right)'(0) 
  \end{matrix}
  \right|.
\end{equation}
\end{prop}
\begin{cor}
	We are interested only in the even polynomials $g_{2m}$.
As monic polynomials $\widetilde{K}_m$ are defined on a symmetric interval, we have
$\widetilde{K}_{2l+1}(0)=0$ and $\widetilde{K}_{2l}'(0)=0$. Then
$C_{2m,2}=\widetilde{K}_{2m}(0)\widetilde{K}_{2m+1}'(0)$ and
\begin{align}
 G_{2m}(u)&=
\frac{1}{u^2}\left(\widetilde{K}_{2m+2}(u)-
\frac{\widetilde{K}_{2m+2}(0)}{\widetilde{K}_{2m}(0)}
\widetilde{K}_{2m}(u)\right).\numberthis \label{eq:orthonormal-BC-Christoffel-transform-explicit}
\end{align}
By the same considerations, $C_{2m+1,2}=-\widetilde{K}_{2m+2}(0)\widetilde{K}_{2m+1}'(0)$ and 
\begin{align}\label{odd_expl}
	G_{2m+1}(u)&=\frac{1}{u^2}\left(\widetilde{K}_{2m+3}(u)-\frac{\widetilde{K}_{2m+3}'(0)}{\widetilde{K}_{2m+1}'(0)}\widetilde{K}_{2m+1}(u) \right).
\end{align} 
\end{cor}

We denote the orthogonality relation as
\begin{align}\label{G_normalized_ort_rel}
    \sum\limits_{i=-K/2}^{K/2}(i/n)^2W(i/n)G_m(i/n)G_k(i/n)=\Lambda_m\delta_{mk},
\end{align}
where $\Lambda_m=||G_m||$ can be computed by the recurrence (see next section). 
The monic symplectic polynomials $\{G_m(u)\}$ have the following hypergeometric realization for even $m$
\begin{align}\notag
		&G_m(u)=\frac{(-1)^m p^{m+2}(m+2)!}{u^2n^{m+2}}\binom{K}{m+2}\bigg(\,_2F_1\left(-m-2,-K/2-nu,-K; \frac{1}{p}\right)-\\\label{eq:symplectic poly hyp rep}
&-\frac{\,_2F_1\left(-m-2,-K/2,-K; \frac{1}{p}\right)}{\,_2F_1\left(-m,-K/2,-K; \frac{1}{p}\right)}\,_2F_1\left(-m,-K/2-nu,-K; \frac{1}{p}\right)\bigg).
\end{align}
It is also useful to note that this hypergeometric representation significantly simplifies for $p=1/2$ because of the Gauss's formula $\frac{\,_2F_1\left(-m-2,-K/2,-K; 2\right)}{\,_2F_1\left(-m,-K/2,-K; 2\right)}= \frac{m+1}{(-K+m+1)}$ (see Appendix),
\begin{align*}
    &G_m(u)= \frac{1}{u^2\kappa_{m+2}n^{m+2}}\binom{K}{m+2}^{1/2}\bigg(\,_2F_1\left(-m-2,-K/2-nu,-K; 2\right)+\\
    &+\frac{(m+1)}{(K-m-1)}\,_2F_1\left(-m,-K/2-nu,-K; 2\right)\bigg).\numberthis \label{G_Hyp_rep}
\end{align*}
\begin{remark}
	We define the symplectic polynomials $\{g_m(u)\}$ as a Christoffel transformation of the polynomials $\{\widetilde{K}_m(u)\}$ (see \eqref{eq:monic-BC-Christoffel-transform}) for any $p\in (0,1)$ from \eqref{shifted-monic-krawtchouk-hyp-rep}. All methods used in this work are defined for $p\in(0,1)$ unless otherwise specified, however, all basic calculations are performed only in the case of $p=1/2$. The reason for this is that symplectic polynomials for $p\neq 1/2$ do not have a clear probabilistic interpretation as \eqref{eq:mu-as-christoffel-darboux}. 
\end{remark}~\\
\subsection{QR algorithm and three term relation's coefficients}~\\
Using the results from \cite{BUHMANN1992117}, one can derive a three-term recurrence relation for symplectic polynomials. This can be achieved by employing the fact that the Christoffel transformation provides a method for constructing orthogonal polynomials with respect to the measure $ x^2 W(x)dx$ from those that are orthogonal with respect to the positive Borel measure $ W(x)dx$.
 
Introducing $\pi_m(x)=n^m\kappa_m\widetilde{K}_m(x)$, one can check that the polynomials $\{\pi_m\}$ satisfy the conditions
\begin{align*}
    &\int\limits_{-\infty}^\infty \pi_m(x)\pi_n(x) \, d\mu(x) = \delta_{mn},\quad\int\limits_{-\infty}^\infty d\mu(x) = 1,  \numberthis \label{eq:orthon-conditions}
\end{align*}  
with the measure
\begin{align*}
    d\mu(x)=dxW(x)\sum\limits_{i=-K/2}^{K/2}\delta(x-i/n),
\end{align*}
 that is automatically normalized,
\begin{align*}
    &\int\limits_{-\infty}^\infty d\mu(x)=\sum\limits_{i=-K/2}^{K/2}W(i/n)=\sum\limits_{i=-K/2}^{K/2}\binom{K}{K/2+i}p^{K/2+i}(1-p)^{K/2-i}=1.
    \end{align*}
Their three term recurrence relation is given by Jacobi matrix
\begin{align}\label{TTR-orthonormal}
   x\pi_m(x)=a_{m+1}\pi_{m+1}(x)+b_m\pi_m(x)+a_m\pi_{m-1}(x),
\end{align} 
with the starting conditions $\pi_{-1}(x)=0,\, \pi_0(x)=1.$   
     It is obvious that the Christoffel transformation from the $\pi_m(x)$ is the same as multiplying the result of Christoffel transformation from $\tilde{K}_m(x)$ by $n^{m+2}\kappa_{m+2}$, so we recall
\begin{align*}
    &x^2 n^{m+2}\kappa_{m+2}G_m(x)=\pi_{m+2}(x)-
\frac{\pi_{m+2}(0)}{\pi_{m}(0)}\pi_{m}(x). \numberthis \label{G_m_from_Pi}
\end{align*}
We also need to compute the second moment
\begin{align*}
    &\Omega_2(K,p,n)=\int\limits_{-\infty}^\infty x^2 d\mu(x)=\sum\limits_{i=-K/2}^{K/2}(i/n)^2\binom{K}{K/2+i}p^{K/2+i}(1-p)^{K/2-i}=\\
    &=\frac{1}{n^2}\sum\limits_{j=0}^K(j-K/2)^2\binom{K}{j}p^{j}(1-p)^{K-j}=\frac{1}{4n^2} K \left(K (1-2 p)^2-4 (p-1) p\right). \numberthis \label{eq:x^2_measure_normalisation}
\end{align*}
 Now we can determine the coefficients of the three term relation for the polynomials $\{\pi_m(x)\}$ from three term relation coefficients $\{\tilde{\alpha}_m,\tilde{\beta}_m\}$ of the monic polynomials $\{\widetilde{K}_m\}$ (see \cite{Koekoek})
\begin{align*}
    \numberthis \label{eq:TTR-Pi-polynomials}
    &x\widetilde{K}_m(x)=\widetilde{K}_{m+1}(x)+\tilde{\alpha}_m\widetilde{K}_m(x)+\tilde{\beta}_m\widetilde{K}_{m-1}(x)=\widetilde{K}_{m+1}(x)+\\
    &+[p(K-m)+m(1-p)-K/2]\widetilde{K}_m(x)+\frac{m(K-m+1)}{n^2}p(1-p)\widetilde{K}_{m-1}(x).
\end{align*}
 So we get the Jacobi matrix coefficients for (\ref{TTR-orthonormal})
\begin{align} \label{am_TTR}
   & a_m=\frac{\kappa_{m-1}}{n\kappa_m}=-\frac{\big[m(K-m+1)(p(1-p))\big]^{1/2}}{n}=-\sqrt{\tilde{\beta}_m},\\ \label{bm_TTR}
   &b_m=\left[p(K-m)+m(1-p)-K/2\right]=\tilde{\alpha}_m.
\end{align}
For $p=1/2$ the weight function $W(x)$ becomes symmetric and $b_m=0$ as expected. 

Following the Theorem 3.3 in \cite{BUHMANN1992117}, consider the Jacobi matrix of the three term relation coefficient for the orthonormal polynomials with the measure $\mu$. Denote by $\omega$ the natural mapping from the set of all normalized, real, positive Borel measures $\mathcal{B}$ to the set of all singly-infinite, symmetric, tridiagonal irreducible matrices $\mathcal{J}$
\begin{align*}
	\omega:\mathcal{B}\to \mathcal{J},
\end{align*}
that takes every member of $\mathcal{B}$ to the corresponding Jacobi matrix. By the Favard theorem \cite{chihara2011introduction} this mapping is invertible if and only if the underlying Hamburger problem is determinate \cite{akhiezer1965classical}. By the inverse mapping we mean $\omega^{-1}:\mathcal{J}\to \mathcal{B}^*$, where $\mathcal{B}^*$ is the quotient by the equivalence classes of the measures with identical moment sequences. In our case the measure $d\mu (x)$ defined above has the discrete and finite support, which means that the moment problem for $\text{Im}(\omega)$ is always determinate and the equivalence class of the element $\omega^{-1}(J)$ consists of a single element (see \cite{Simon97}, \cite{Christiansen2004IndeterminateMP}, \cite{Simon99}).
  Then the QR algorithm gives the reversible arrows in the diagram 
\begin{equation*}
 \begin{tikzcd}
 &J \arrow{rr}{\tau}
 &&\widehat{J}\\
 & d\mu \arrow{u}{\omega} \arrow{rr}[swap]{\rho} 
 && d\tilde{\mu} \arrow{u}[swap]{\omega}
\end{tikzcd}   
\end{equation*}
where the map $\tau$ is a step of $QR$ algorithm, as applied to a square matrix $J$, replaces it by the product
$\widehat{J} = RQ$, where $J = QR$, Q is orthogonal and R is upper triangular. 

 We can identify the Cristoffel transformation of $\pi_m(x)$ with the normalized measure transformation, that is defined by the commutative diagram above
\begin{align*}
    \omega^{-1}\circ \tau \circ \omega =\rho:d\mu(x)\to d\tilde{\mu}(x)=\frac{x^2 d\mu(x)}{\Omega_2(K,p,n)}.
\end{align*}
$QR$ factorization of Jacobi matrix $J$ is its decomposition into the product $J=Q_1^TQ_2^T...Q_{n-1}^TR$, where $Q_i$ are block-diagonal antisymmetric matrices, which turn the subdiagonal elements to zero (Givens rotation). For example, letting $r_0=\sqrt{a_1^2+b_0^2}$, we take $Q_1$ to be identity except that the upper left $2\times2$ block is
\begin{align*}
	Q_1\big|_{( e_1,e_2) }=\begin{bmatrix}
		 b_0/r_0 & a_1/r_0\\
		 -a_1/r_0 & b_1/r_0
	\end{bmatrix}.
\end{align*}
Then the result of the first Givens rotation acting on the initial matrix $J$ is
\begin{align*}
Q_1 J \;=\;
\begin{bmatrix}
  r_0 & (a_1b_0 + a_1b_1)/r_0 & (a_1a_2)/r_0 &  \\
      &(b_0b_1 - a_1^2)/r_0 & (b_0a_2)/r_0 & 0 \\
  &a_2 & b_2 & a_3 &    \ddots   \\
      && a_3 & b_3 & \ddots  \\
      & &    & \ddots & \ddots 
\end{bmatrix}
=
\begin{bmatrix}
  r_0 & s_1         & (a_1a_2)/r_0 &        \\
  &b_1^*      & a_2^*              &   0     \\
  &a_2 & b_2        & a_3            &     \ddots   \\
  &    & a_3        & b_3                & \ddots \\
      &  &            & \ddots             & \ddots
\end{bmatrix}.
\end{align*}
The trailing coefficients \( b_i^* \) and \( a_{i+1}^* \), obtained by applying \( i \) successive Givens rotations to \( J \), serve as the parameters for the subsequent rotation \( Q_{i+1} \). The nontrivial part of $Q_{i+1}$, in its $(k+1)$st and $(k+2)$nd rows and columns, has the form
\begin{align*}
	Q_{i+1}\big|_{( e_{i+1},e_{i+2}) }=\begin{bmatrix}
		 b_i^*/r_i & a_{i+1}/r_i\\
		 -a_{i+1}/r_i & b_i^*/r_i
	\end{bmatrix}.
\end{align*}
 The resulting upper triangular matrix $R$ is defined by diagonal elements $r_0, r_1,\dots$ and superdiagonal elements $s_1, s_2,\dots$, respectively, where
\begin{align*}
   &a^*_1=a_1,\quad b_0^*=b_0,\\
   &a_k^*=\frac{a_kb^*_{k-2}}{r_{k-2}}, \quad  b^*_k=\frac{b^*_{k-1}b_k-a^*_ka_k}{r_{k-1}},\\
   & r^2_k=a^2_{k+1}+b^{*2}_k, \quad s_k=\frac{a_k^*b^*_{k-1}+a_kb_k}{r_{k-1}}=\frac{a_kb_{k-2}^*b_{k-1}^*}{r_{k-2}r_{k-1}}+\frac{a_{k}b_k}{r_{k-1}}.
\end{align*}
The elements above the superdiagonal in the upper-triangular matrix \( R \) are not involved in the QR-transformed matrix \( \widehat{J} \) and are irrelevant for our purposes.  
The elements of the QR-transformed Jacobi matrix \( \widehat{J} = RQ \) are defined by 
\begin{align*}
    & \hat{a}_k=\frac{a_kr_k}{r_{k-1}},\quad \hat{b}_k=\frac{b_{k-1}^*b_k^*}{r_{k-1}}+\frac{a_{k+1}s_{k+1}}{r_k},\numberthis\label{CD-trans-TTR_0}
\end{align*}
with $\hat{b}_0=b_0+\frac{a_1s_1}{r_0}$.

The three-term relation for modified polynomials is then
\begin{align}\label{hat_g_TTR}
    x\hat{g}_m(x)=\hat{a}_{m+1}\hat{g}_{m+1}(x)+\hat{b}_m\hat{g}_m(x)+\hat{a}_m\hat{g}_{m-1}(x),
\end{align}
where $\hat{g}_m(x)$ satisfy orthogonal relation
\begin{align}\label{symp_orthon_rel}
	\sum\limits_{i=-K/2}^{K/2}\hat{g}_m(i/n)\hat{g}_l(i/n)(i/n)^2\frac{W(i/n)}{\Omega_2(K,p,n)}=\delta_{ml}.
\end{align}
There is the direct computation of the recurense relations of the normalization $\Lambda_m$, which appears in \eqref{G_normalized_ort_rel} as the norm of the symplectic polynomials. Let us define the three term relation 
\begin{align}\label{TTR_monic sympl}
	xG_m(x)=G_{m+1}(x)+\alpha_m G_m(x)+\beta_m G_{m-1}(x),
\end{align}
then 
\begin{align}
	\sum_{x\in \mathfrak{X}}G_m(x)G_l(x)x^2W(x)=\delta_{ml}\Lambda_m, \quad \mathfrak{X}=\bigg\{-\frac{K}{2n},-\frac{K}{2n}+\frac{1}{n},...,\frac{K}{2n}\bigg \}.
\end{align}
Denoting $n^{-2m}\kappa_m^{-2}=L_m$ in \eqref{MSK_orth}, $S_{2l}=-\widetilde{K}_{2l+2}(0)/\widetilde{K}_{2l}(0)$ in \eqref{eq:orthonormal-BC-Christoffel-transform-explicit}, $S_{2l+1}=-\widetilde{K}_{2l+3}'(0)/\widetilde{K}_{2l+1}'(0)$ in \eqref{odd_expl}, one can compute the unknown square of norm $\Lambda_m=||G_m||^2$ and coefficients of three-term relations from the following nonlinear recursions.
\begin{prop}
	The squared norms and the coefficients of the three-term recurrence relation for the monic symplectic polynomials are governed by a nonlinear recurrence relation of the form below
	\begin{align} \label{rec1}
	&S_mL_m=\beta_m \Lambda_{m-1},\\ \label{rec2}
&L_{m+2}+S_m^2L_{m}=\Lambda_{m+1}+\beta_{m}^2 \Lambda_{m-1}+\alpha_m^2 \Lambda_m,\\ \label{rec3}
	&0=\beta_{m}\alpha_{m-1}\Lambda_{m-1}+\alpha_m \Lambda_m.
\end{align}
\end{prop}
\begin{proof}
From the definition of the Christoffel transformation it follows that
\begin{align}\notag
	&\sum_{x\in \mathfrak{X}}G_m(x)G_l(x)x^4W(x)=\sum_{x\in \mathfrak{X}}(\widetilde{K}_{m+2}(x)+S_m \widetilde{K}_m(x))(\widetilde{K}_{l+2}(x)+S_l \widetilde{K}_l(x))W(x)=\\ 
	&=\delta_{m,l}L_{m+2}+\delta_{m,l}S_m^2L_m+\delta_{m,l+2}S_{m}L_m+\delta_{m+2,l}S_lL_l.
\end{align}
On the other hand, using \eqref{TTR_monic sympl}, we get
\begin{align}\notag
	&\sum_{x\in \mathfrak{X}}G_m(x)G_l(x)x^4W(x)=\\ \notag
	&=\sum_{x\in \mathfrak{X}}(G_{m+1}(x)+\alpha_{m}G_m(x)+\beta_m G_{m-1}(x))(G_{l+1}(x)+\alpha_{l}G_l(x)+\beta_l G_{l-1}(x))x^2W(x)=\\ \notag
	&=\delta_{m,l}\Lambda_{m+1}+\delta_{m+2,l}\beta_l \Lambda_{m+1}+\delta_{m,l+2}\beta_m \Lambda_{m-1}+\delta_{m,l}\beta_m^2 \Lambda_{m-1}+\delta_{ml}\alpha_m^2\Lambda_m+\\
	&+\delta_{m+1,l}\alpha_l\Lambda_{m+1}+\delta_{m-1,l}\beta_m \alpha_l \Lambda_{m-1}+\delta_{m,l+1}\alpha_m\Lambda_{m}+\delta_{m,l-1}\alpha_m\beta_{l}\Lambda_m.
\end{align}
Comparing these two results we get the recurrences
\begin{align}
	(m=l+2)&:\quad S_mL_m=\beta_m \Lambda_{m-1},\\ 
	(m=l)&:\quad L_{m+2}+S_m^2L_{m}=\Lambda_{m+1}+\beta_{m}^2 \Lambda_{m-1}+\alpha_m^2 \Lambda_m,\\
	(m=l+1)&:\quad 0=\beta_{m}\alpha_{m-1}\Lambda_{m-1}+\alpha_m \Lambda_m.
\end{align}
\end{proof}
Now, comparing \eqref{symp_orthon_rel}
 with \eqref{G_normalized_ort_rel}, we get
 $$g_m(x)=\frac{\hat{g}_m(x)}{\sqrt{\Omega_2}},$$
then
\begin{align}
	\tilde{a}_m^2=\hat{a}_m^2=\beta_m,\quad \tilde{b}_m=\hat{b}_m=\alpha_m.
\end{align}
\begin{lemma}
	The coefficients of three term relation os symplectic polynomials for $p=1/2$ can be computed from \eqref{am_TTR} and \eqref{bm_TTR} as
	 \begin{align} \label{b_hat_dir}
    &\hat{b}_k=0 \; (k\geq 0),\\ \label{a_hat_dir}
    & \hat{a}^2_{2m}=\frac{a^2_{2m}a^2_{2m+1}}{r^2_{2m-1}},\quad \hat{a}^2_{2m+1}=r^2_{2m+1},
\end{align}
where 
\begin{align}
	r_{2m+1}^2=a_{2m+2}^2+a_{2m+1}^2\prod\limits_{j=1}^{m}\frac{a_{2j-1}^2}{r_{2j-1}^2}.
\end{align}
\end{lemma}
\begin{proof}
It is not so hard to see that in the case $p=1/2$ all $b_m=0$ and the Givens rotatation's parameters become
\begin{align*}
   &a^*_1=a_1=\frac{(-1)}{2HK^{1/2}}=\frac{x}{\pi_1(x)},\quad b_0^*=b_0=0=b_m,\\
   &a_k^*=\frac{a_kb^*_{k-2}}{r_{k-2}}=\frac{-a_ka_{k-2}^*a_{k-2}}{r_{k-2}r_{k-3}}, \quad  b^*_k=\frac{-a^*_ka_k}{r_{k-1}}.
   \end{align*}
 And now we can express diagonal and superdiagonal elements of the matrix $R$ as 
 \begin{align*}
   & r^2_k=a^2_{k+1}+\frac{(a^*_ka_k)^2}{r_{k-1}^2}, \quad s_k=\frac{a_k^*b^*_{k-1}}{r_{k-1}}=\frac{-a_k^*a_{k-1}^*a_{k-1}}{r_{k-1}r_{k-2}},\\
   &r_0^2=a_1^2, \quad r_1^2=a_2^2+\frac{a_1^4}{r_0^2}=a_2^2+a_1^2, \quad r_2^2=a_3^2,\\
   &a_2^*=0 ,\quad a_3^*=-\frac{a_3a_1^2}{r_0r_1}=-\frac{a_3r_0}{r_1}, \quad a_4^*=0.
\end{align*}
Here $a_0$ should be the constant on which nothing depends. Then we can compute
\begin{align*}
    &a_{2m}^*=0,\quad a_{2m+1}^*=-\frac{a_{2m+1}a_{2m-1}^*a_{2m-1}}{r_{2m-1}r_{2m-2}}=-\frac{a_{2m+1}a_{2m-1}^*}{r_{2m-1}},\\
    &b_{2m}^*=0, \quad b_{2m+1}^*=-a^*_{2m+1}, \quad s_k=0, \; k>0.
    \end{align*}
The recursion on the diagonal coefficients of $R$ gives
    \begin{align} \label{r_dir1}
    &r_{2m}^2=a_{2m+1}^2,\\ \notag
    &r_{2m+1}^2=a_{2m+2}^2+\frac{(a_{2m+1}^*a_{2m+1})^2}{r_{2m}^2}=a_{2m+2}^2+a_{2m+1}^{*2}=\\ \notag
    &=a_{2m+2}^2+\frac{(a_{2m+1}a_{2m-1}a_{2m-3}^*)^2}{r^2_{2m-1}r_{2m-3}^2}=\\ \label{r_dir2}
    &=a_{2m+2}^2+a_{2m+1}^2\prod\limits_{j=1}^{m}\frac{a_{2j-1}^2}{r_{2j-1}^2}=a_{2m+2}^2+\frac{\prod\limits_{j=1}^{m+1}a_{2j-1}^2}{\sum\limits_{\ell=-1}^{m-1}\left(\prod\limits_{k=0}^{\ell}a_{2m-2k}^2\right)\left(\prod\limits_{j=1}^{m-\ell-1}a_{2j-1}^2\right)}.
 \end{align}
Here we have expressed the product directly, assuming that $\prod\limits_{k=0}^{-1} (..)=1$,
\begin{align}\notag
    &\prod\limits_{j=1}^m r^2_{2j-1}=\left(a_{2m}^2+a_{2m-1}^2\prod\limits_{j=1}^{m-1}\frac{a_{2j-1}^2}{r_{2j-1}^2}\right)\prod\limits_{j=1}^{m-1} r^2_{2j-1}=\prod\limits_{j=1}^{m}a_{2j-1}^2+a_{2m}^2\prod\limits_{j=1}^{m-1}r_{2j-1}^2=\\ \notag
    &=\prod\limits_{j=1}^{m}a_{2j-1}^2+a_{2m}^2\prod\limits_{j=1}^{m-1}a_{2j-1}^2
    +...+\left(\prod_{k=0}^{\ell}a_{2m-2k}^2\right)\left(\prod_{j=1}^{m-\ell-1}a_{2j-1}^2\right)+...+\\\label{Euler_ft}
    &+\prod_{k=0}^{m-1}a_{2m-2k}^2=\sum\limits_{\ell=-1}^{m-1}\left(\prod_{k=0}^{\ell}a_{2m-2k}^2\right)\left(\prod_{j=1}^{m-\ell-1}a_{2j-1}^2\right).
\end{align}
Finally we express the QR transformed coefficients as
\begin{align}
    &\hat{b}_0=0, \quad \hat{b}_k=0 \; (k>0),\\ 
    & \hat{a}^2_{2m}=\frac{a^2_{2m}a^2_{2m+1}}{r^2_{2m-1}},\quad \hat{a}^2_{2m+1}=r^2_{2m+1},
\end{align}
which ends the proof of the lemma.
\end{proof}

The expression for $r_{2m+1}^2$, more suitable for asymptotic analysis is given by the next lemma.
\begin{lemma}\label{lem:r2m+1}
Let $K,m,n\in \mathbb{Z}_{\geq 0}$.  Suppose the sequence \(\{r_{2m+1}^2\}\) satisfies the recurrence
\begin{align*}
r_{2m+1}^2=a_{2m+2}^2+a_{2m+1}^2\prod\limits_{j=1}^{m}\frac{a_{2j-1}^2}{r_{2j-1}^2}.
\end{align*}
Then $r_{2m+1}^2$ can be expressed as
\begin{align}
	\frac{r_{2m+1}^2}{a_{2m+2}^2}=\frac{\mathcal{R}^{[2m+1]}_{m}}{\mathcal{S}^{[2m+1]}_{m}},
\end{align}
where $\mathcal{R}_k^{[2m+1]},\,\mathcal{S}_k^{[2m+1]}$ satisfy three term reccurenses
\begin{align}\label{WKB1}
	&\mathcal{R}_k^{[2m+1]}=B_k^{[2m+1]}\mathcal{R}^{[2m+1]}_{k-1}+A_k^{[2m+1]} \mathcal{R}_{k-2}^{[2m+1]},\\ \label{WKB2}
	& \mathcal{S}_k^{[2m+1]}=B_k^{[2m+1]}\mathcal{S}_{k-1}^{[2m+1]}+A_k^{[2m+1]} \mathcal{S}_{k-2}^{[2m+1]},
\end{align}
with starting conditions
\begin{align*}
	&\mathcal{R}_{-1}^{[2m+1]}=1,\quad \mathcal{S}^{[2m+1]}_{-1}=0,\\
	& \mathcal{R}_0^{[2m+1]}=B_0^{[2m+1]}, \quad \mathcal{S}_0^{[2m+1]}=1,
\end{align*}
and the coefficients that are rational functions of $2m+1$
\begin{align*}
	&A_k^{[2m+1]}=-\frac{a_{2m+1-2(k-1)}^2}{a_{2m+2-2(k-1)}^2}, \quad B_{k}^{[2m+1]}=1+\frac{a_{2m+1-2k}^2}{a_{2m+2-2k}^2}=1-A_{k+1}.
\end{align*}
\end{lemma}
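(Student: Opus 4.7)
The plan is to first reduce the given nonlinear product recurrence to a clean two-step Riccati-type form, then iterate it to expose a finite continued fraction, and finally identify its $m$-th convergent via the classical Euler--Wallis recursion.

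For the first step, I would use the defining recurrence itself at index $m-1$ to solve explicitly for the running product:
\[
\prod_{j=1}^{m-1}\frac{a_{2j-1}^2}{r_{2j-1}^2}=\frac{r_{2m-1}^2-a_{2m}^2}{a_{2m-1}^2}.
\]
Multiplying both sides by $a_{2m-1}^2/r_{2m-1}^2$ telescopes one further factor into the product and yields the key identity $\prod_{j=1}^{m}a_{2j-1}^2/r_{2j-1}^2 = 1-a_{2m}^2/r_{2m-1}^2$, which, when substituted back into the original recurrence, gives the two-step Riccati form
\[
r_{2m+1}^2=(a_{2m+2}^2+a_{2m+1}^2)-\frac{a_{2m+1}^2\,a_{2m}^2}{r_{2m-1}^2},\qquad m\ge1,
\]
with initial value $r_1^2=a_2^2+a_1^2$. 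This telescoping identity is the heart of the proof; everything afterwards is symbolic manipulation.

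Next I would divide by $a_{2m+2}^2$ and set $T_m:=r_{2m+1}^2/a_{2m+2}^2$. Since $r_{2m-1}^2=a_{2m}^2\,T_{m-1}$, the two-step recurrence collapses to the first-order form
\[
T_m=B_0^{[2m+1]}+\frac{A_1^{[2m+1]}}{T_{m-1}}.
\]
A short bookkeeping check confirms the shift identities $B_0^{[2(m-k)+1]}=B_k^{[2m+1]}$ and $A_1^{[2(m-k)+1]}=A_{k+1}^{[2m+1]}$, so that iterating $m$ times produces the finite continued fraction
\[
T_m=B_0^{[2m+1]}+\cfrac{A_1^{[2m+1]}}{B_1^{[2m+1]}+\cfrac{A_2^{[2m+1]}}{\cdots+\cfrac{A_m^{[2m+1]}}{B_m^{[2m+1]}}}},
\]
which terminates because $T_0=r_1^2/a_2^2=1+a_1^2/a_2^2=B_m^{[2m+1]}$.

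Finally I would invoke the classical Euler--Wallis recursion for convergents: the $k$-th convergent of any continued fraction $b_0+a_1/(b_1+a_2/(\cdots))$ is $P_k/Q_k$, where both $P_k$ and $Q_k$ satisfy the same three-term recurrence $X_k=b_kX_{k-1}+a_kX_{k-2}$ subject to $P_{-1}=1$, $P_0=b_0$, $Q_{-1}=0$, $Q_0=1$. Specialising to $b_k=B_k^{[2m+1]}$ and $a_k=A_k^{[2m+1]}$ reproduces exactly the recursions \eqref{WKB1}--\eqref{WKB2} together with their stated initial data, hence $T_m=\mathcal{R}_m^{[2m+1]}/\mathcal{S}_m^{[2m+1]}$, which is the claim. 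The main obstacle is not conceptual but purely notational: verifying that depth $k$ in the unfolded continued fraction truly carries the coefficients $B_k^{[2m+1]}$ and $A_k^{[2m+1]}$, and that the termination occurs at the correct boundary value, requires a careful induction on $k$ tracking the parity and offset of the $a$-subscripts appearing at each level.
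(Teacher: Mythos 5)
Your proposal is correct and follows essentially the same route as the paper: the same telescoping identity reducing the product recurrence to $r_{2m+1}^2=a_{2m+2}^2+a_{2m+1}^2\bigl(1-a_{2m}^2/r_{2m-1}^2\bigr)$, the same unfolding into a finite continued fraction with coefficients $A_k^{[2m+1]},B_k^{[2m+1]}$, and the same identification of the convergents via the Euler--Wallis three-term recursion with the stated initial data.
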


\begin{proof}
Using the recursion
\begin{align*}
	&r_{2m+1}^2=a_{2m+2}^2+a_{2m+1}^2\left(\prod\limits_{j=1}^{m-1}\frac{a_{2j-1}^2}{r_{2j-1}^2}\right)\frac{a_{2m-1}^2}{r_{2m-1}^2}=a_{2m+2}^2+\frac{a_{2m+1}^2a_{2m-1}^2}{r_{2m-1}^2}\frac{(r_{2m-1}^2-a_{2m}^2)}{a_{2m-1}^2}=\\
	&=a_{2m+2}^2+a_{2m+1}^2\bigg(1-\frac{a_{2m}^2}{r_{2m-1}^2}\bigg),
\end{align*}
we get
\begin{align*}
\frac{r_{2m+1}^2}{a_{2m+2}^2}=1+\frac{a_{2m+1}^2}{a_{2m+2}^2}\left(1-\frac{1}{r_{2m-1}^2/a_{2m}^2}\right), \quad \frac{r^2_{1}}{a_{2}^2}=1+\frac{a_1^2}{a_2^2}.
\end{align*}
This can be expressed as a convergent of the continued fractions\footnote{In this sense, formula \eqref{Euler_ft} is analogous to the Euler's continued fraction formula $$\gamma_0 + \gamma_0 \gamma_1 + \gamma_0 \gamma_1 \gamma_2 + \cdots + \gamma_0 \gamma_1 \gamma_2 \cdots \gamma_n =
\frac{\gamma_0}{1 +} \, \frac{-\gamma_1}{1 + \gamma_1 +} \, \cfrac{-\gamma_2}{1 + \gamma_2 +} \cdots \frac{-\gamma_n}{1 + \gamma_n}.$$}
\begin{align*}
	\frac{r_{2m+1}^2}{a_{2m+2}^2}&=\bigg(1+\frac{a_{2m+1}^2}{a_{2m+2}^2}\bigg)-\frac{a_{2m+1}^2/a_{2m+2}^2}{r_{2m-1}^2/a_{2m}^2}=\\
	&=\bigg(1+\frac{a_{2m+1}^2}{a_{2m+2}^2}\bigg)-\frac{a_{2m+1}^2/a_{2m+2}^2}{\left(1+\dfrac{a_{2m-1}^2}{a_{2m}^2}\right)-\dfrac{a_{2m-1}^2/a_{2m}^2}{r_{2m-3}^2/a_{2m-2}^2}}=\\
	&=B_0^{[2m+1]}+\frac{A_1^{[2m+1]}}{B_1^{[2m+1]}+}\;\frac{A_2^{[2m+1]}}{B_2^{[2m+1]}+}...\frac{A_{m}^{[2m+1]}}{B_{m}^{[2m+1]}},
\end{align*}
where
\begin{align*}
	&A_k^{[2m+1]}=-\frac{a_{2m+1-2(k-1)}^2}{a_{2m+2-2(k-1)}^2}, \quad B_{k}^{[2m+1]}=1+\frac{a_{2m+1-2k}^2}{a_{2m+2-2k}^2}=1-A_{k+1}^{[2m+1]}.
	\end{align*}
Then there exist the unique solutions of three term reccurence for $k\geq 1$ (see, for example, \cite{deiftorthogonal}) 
\begin{align*}
	&\mathcal{R}_k^{[2m+1]}=B_k^{[2m+1]}\mathcal{R}^{[2m+1]}_{k-1}+A_k^{[2m+1]} \mathcal{R}_{k-2}^{[2m+1]}, \\
	&\mathcal{S}_k^{[2m+1]}=B_k^{[2m+1]}\mathcal{S}_{k-1}^{[2m+1]}+A_k^{[2m+1]} \mathcal{S}_{k-2}^{[2m+1]},
\end{align*}
with starting conditions
\begin{align*}
	&\mathcal{R}_{-1}^{[2m+1]}=1,\quad \mathcal{S}^{[2m+1]}_{-1}=0,\\
	& \mathcal{R}_0^{[2m+1]}=B_0^{[2m+1]}, \quad \mathcal{S}_0^{[2m+1]}=1,
\end{align*}
such that 
\begin{align*}
	\frac{r_{2m+1}^2}{a_{2m+2}^2}=\frac{\mathcal{R}^{[2m+1]}_{m}}{\mathcal{S}^{[2m+1]}_{m}}.
\end{align*}
\end{proof}
\begin{cor}
Now we note that in the double-scaling limit $n=H K,\;m=\frac{\lambda}{2}K,\; \lambda\in (0,1),\; H\in (0,1/2),\; K\to \infty$ one can find for $ k+1\leq m$
\begin{align}
	\mathcal{S}_{k}^{[2m+1]}=\mathcal{S}^{[2m+1]}_{k-1}-A_{k+1}^{[2m+1]}\mathcal{S}_{k-1}^{[2m+1]}+A_{k}^{[2m+1]}\mathcal{S}_{k-2}^{[2m+1]}=\mathcal{S}_{k-2}^{[2m+1]}+O(1/K).
\end{align}
For odd m, we extend the recursion \eqref{WKB1},\eqref{WKB2} with the conditions
\begin{align*}
	&A_{k}^{[2m+1]}=\begin{cases}-\frac{a_{2m+1-2(k-1)}^2}{a_{2m+2-2(k-1)}^2},\quad k\leq m,\\
	0,\quad k>m,
\end{cases}\\
&B_{k}^{[2m+1]}=\begin{cases}
	1-A_{k+1},\quad k<m,\\
	1+a_1^2/a_2^2,\quad k=m,\\
	1,\quad k>m
\end{cases}
\end{align*}
so that the three-term relations stabilize at $k\geq m+1$ and we can write
\begin{align}
	\frac{r_{2m+1}^2}{a_{2m+2}^2}=\frac{\mathcal{R}^{[2m+1]}_{m}}{\mathcal{S}^{[2m+1]}_{m}}=\frac{\mathcal{R}^{[2m+1]}_{m+1}}{\mathcal{S}^{[2m+1]}_{m+1}}=\frac{\mathcal{R}^{[2m+1]}_{0}}{\mathcal{S}^{[2m+1]}_{0}}+O(1/K)=1+\frac{a_{2m+1}^2}{a_{2m+2}^2}+O(1/K).
\end{align}
Then, the asymptotics of the coefficients of three term relation \eqref{hat_g_TTR} for $p=1/2$ are
\begin{align}
	    &\hat{b}_0=0, \quad \hat{b}_k=0 \; (k>0),\\
    & \hat{a}^2_{2m}=\frac{a^2_{2m}a^2_{2m+1}}{r^2_{2m-1}}=\frac{a_{2m}^2a_{2m+1}^2}{a_{2m}^2+a_{2m-1}^2}+O(1/K),\\
    &\hat{a}^2_{2m+1}=r^2_{2m+1}=a_{2m+1}^2+a_{2m+2}^2+O(1/K).
\end{align}

In order to find the next terms of the asymptotic expansion in the limit under consideration, it is useful to note that the coefficients of the three-term recurrence relations \eqref{WKB1} and \eqref{WKB2} are slowly varying functions\footnote{A function \( f(k) \) is referred to as slowly varying if \( f(k) = F(\eta k) \), where \( \eta \) is a small parameter and \( F \) is a smooth function.} of the natural parameter:  
\begin{align}  
	&A_{k+1}^{[2m+1]}-A_{k}^{[2m+1]}=-\left(\frac{a_{2m+1-2k}^2}{a_{2m+2-2k}^2}-\frac{a_{2m+1-2(k-1)}^2}{a_{2m+2-2(k-1)}^2}\right)= O(1/K^2).  
\end{align}  
One may then employ the WKB method \cite{Braun1978TMF} to construct the asymptotic solution of the three-term recurrence relations \eqref{WKB1} and \eqref{WKB2}. 
\end{cor}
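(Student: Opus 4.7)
The corollary's three displayed asymptotics for the QR--transformed coefficients reduce, via the exact identities \eqref{b_hat_dir}--\eqref{a_hat_dir}, to the single continued-fraction statement $r_{2m+1}^2/a_{2m+2}^2 = 1 + a_{2m+1}^2/a_{2m+2}^2 + O(1/K)$; by Lemma~\ref{lem:r2m+1} this is the same as saying that the convergent $\mathcal R_m^{[2m+1]}/\mathcal S_m^{[2m+1]}$ differs from its seed $B_0^{[2m+1]}$ by at most $O(1/K)$ in the scaling $n=HK$, $m=\lambda K/2$, $K\to\infty$. The claim $\hat b_k\equiv 0$ is already exact from \eqref{b_hat_dir}, so only this one asymptotic must be established.

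Step~1 is the slow-variation estimate $A_{k+1}^{[2m+1]}-A_k^{[2m+1]}=O(1/K^2)$. For $p=1/2$ one has $a_j^2=j(K-j+1)/(4n^2)$, so with $\xi=j/K$ ranging over a compact subinterval of $(0,1)$ a Taylor expansion gives $a_{j+1}^2/a_{j+2}^2 = 1 + K^{-1}\psi(\xi) + O(1/K^2)$ for a smooth $\psi$. Since the update $k\mapsto k+1$ translates $\xi$ by $2/K$, the finite difference is $2\psi'(\xi)/K^2 + O(1/K^2) = O(1/K^2)$.

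Step~2 combines this slow variation with the algebraic identity $B_k^{[2m+1]}=1-A_{k+1}^{[2m+1]}$ to control the convergent. Substituting into \eqref{WKB2} gives the identity highlighted at the start of the corollary, and analogously for $\mathcal R$. Extending the recursion to $k>m$ by $A_k=0$, $B_k=1$ (with boundary value $B_m=1+a_1^2/a_2^2$) makes both sequences stationary past $k=m$, so $\mathcal R_{m+1}/\mathcal S_{m+1}=\mathcal R_m/\mathcal S_m$ automatically. One then transports this ratio back from $k=m$ to $k=0$ by the WKB ansatz for the slowly varying three-term relation sketched at the end of the corollary, tracking the local drift of size $O(1/K^2)$ across $m\sim K$ iterations.

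The principal obstacle is the rigorous execution of Step~2. The characteristic polynomial of the frozen recurrence factors as $(\lambda-1)(\lambda+A_k)$ with $-A_k\to 1$, so its two fundamental solutions coalesce --- a confluent resonance that in the strictly frozen limit already produces $\mathcal S_k=k+1$ and $\mathcal R_k=k+2$, so the convergents grow only algebraically. Reconciling this slow approach to the limit with a uniform $O(1/K)$ bound on $\mathcal R_k/\mathcal S_k - B_0$ rules out an off-the-shelf Gronwall estimate and demands careful amplitude-phase tracking in the WKB ansatz, with explicit remainder control across the marginal regime; this is the main technical point to check. Once the uniform bound is in hand, substitution into \eqref{a_hat_dir} yields the stated asymptotics for $\hat a_{2m}^2$ and $\hat a_{2m+1}^2$, completing the proof.
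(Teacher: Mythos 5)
Your Step 1 and the reduction of the corollary to the single estimate $r_{2m+1}^2/a_{2m+2}^2=B_0^{[2m+1]}+O(1/K)$ are sound, but Step 2 --- the only step with real content --- is left open, and your own calculation shows it cannot be closed, because the estimate you are trying to transport is false. In the frozen limit $A_k\to-1$, $B_k\to 2$ you correctly obtain $\mathcal{S}_k\approx k+1$ and $\mathcal{R}_k\approx k+2$ from the initial data $\mathcal{S}_{-1}=0,\ \mathcal{S}_0=1,\ \mathcal{R}_{-1}=1,\ \mathcal{R}_0=B_0$; hence $\mathcal{R}_m/\mathcal{S}_m=1+O(1/m)\to 1$, whereas $\mathcal{R}_0/\mathcal{S}_0=B_0=1+a_{2m+1}^2/a_{2m+2}^2\to 2$. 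The quantity you need to bound by $O(1/K)$ is therefore $1+o(1)$: this is not a ``marginal regime'' requiring delicate remainder control, it is an $O(1)$ obstruction that no WKB amplitude--phase tracking will remove. The same conclusion follows from the exact recursion: writing $\rho_j=r_{2j-1}^2/a_{2j}^2$ one has $\rho_{j+1}=1+\frac{a_{2j+1}^2}{a_{2j+2}^2}\bigl(1-\rho_j^{-1}\bigr)$ with $\rho_1=1+a_1^2/a_2^2\approx 3/2$, which gives $\rho_j=1+\frac{1}{2j}+\cdots$, so in the double-scaling limit $\hat{a}_{2m+1}^2=r_{2m+1}^2=a_{2m+2}^2\bigl(1+O(1/K)\bigr)$ and $\hat{a}_{2m}^2=a_{2m}^2a_{2m+1}^2/r_{2m-1}^2=a_{2m+1}^2\bigl(1+O(1/K)\bigr)$ --- an index shift of the Jacobi coefficients, not the displayed formulas, each of which is off by a factor of $2$ at leading order.

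For what it is worth, the paper's own derivation is the same transport argument compressed into the assertion $\mathcal{S}_k=\mathcal{S}_{k-2}+O(1/K)$, and that assertion already fails at $k=1$: $\mathcal{S}_1=B_1^{[2m+1]}\approx 2$ while $\mathcal{S}_{-1}=0$; the sequence grows linearly, exactly as your resonance analysis predicts. So the correct outcome of your proof attempt is not ``a technical point to check'' but a refutation: the confluence of the two characteristic roots is precisely the mechanism by which the convergent drifts an $O(1)$ distance from its seed $B_0$ over $m\sim K$ steps. You should state this and replace the corollary's asymptotics by $\hat{a}_{j}^2=a_{j+1}^2+O(1/K)$. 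The secondary caveat in your Step 1 --- that $A_{k+1}^{[2m+1]}-A_k^{[2m+1]}$ is only $O(1/j^2)$ rather than $O(1/K^2)$ once the running index $j=2m+3-2k$ becomes $O(1)$ --- is then harmless, since the explicit product form \eqref{r_dir2} controls that range directly.
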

\begin{remark}
An alternative nonlinear recursion for determining the coefficients $\tilde{a},\tilde{b}$ when $p\in(0,1)$ follows from equations \eqref{rec1}, \eqref{rec2}, and \eqref{rec3}.  Eliminating $\Lambda_m$ from these equations, we obtain
\begin{align}
L_{m+2} + S_{m}^2 L_m
&= \frac{S_{m+2} L_{m+2}}{\beta_{m+2}} + \beta_m S_m L_m+ \alpha_m^2 \frac{S_{m+1} L_{m+1}}{\beta_{m+1}},\\
S_m L_m\alpha_{m-1}
&= -\alpha_m \frac{S_{m+1} L_{m+1}}{\beta_{m+1}}.
\end{align}
The first of these equations is nonlinear and of second order.  It remains an open question whether this system can be reduced to two coupled nonlinear first‐order recurrences, for example by methods analogous to those in \cite{Dzhamay_2018} and \cite{Dzhamay_2020}.

In the special case $p=1/2$, one finds $\alpha_m=0$, and the remaining recurrence
\begin{align}\label{rec4}
L_{m+2}+S_m^2L_m=\frac{S_{m+2} L_{m+2}}{\beta_{m+2}}+\beta_m S_mL_m=\Lambda_{m+1}+\frac{S_m^2L_m^2}{\Lambda_{m-1}},
\end{align}
becomes first order on the subsequences $\beta_{2m}$ and $\beta_{2m+1}$, recovering directly the analogue of the recursion from Lemma 1.
\end{remark}

\begin{table}[h]
    \caption{Monic Krawtchouk polynomials and monic symplectic polynomials for $p=1/2$.}
  \label{tab:polynomials}
  \centering
    \begin{tabular}{|l|l|}
      \hline
      $\widetilde{K}_{m}(x)$ & $G_{m}(x)$ \\
      \hline
      $1$ & $1$\\
      $x$ & $x$\\
      $x^{2} - \frac{K}{4 \, n^{2}}$ & $x^{2} - \frac{3 \, K+2}{4 \, n^{2}}$\\
      $x^{3} - \frac{1}{4} \, x {\left(\frac{3 \, K-2}{n^{2}}\right)}$ & $x^3-\frac{1}{4}x\frac{\left(15 K^2-30 K+16\right) }{(3 K-2) n^2}$\\
      $x^{4} - \frac{1}{2} \, x^{2} {\left(\frac{3 \, K-4}{n^{2}} \right)} + \frac{3 \, K^{2}-6\,K}{16 \, n^{4}}$& $x^4-\frac{5}{2}x^{2} \frac{(K-2)}{n^2}+\frac{15 K^2-50
   K+24}{16 n^4}$\\
      $x^{5} - \frac{5}{2} \, x^{3} {\left(\frac{K-2}{n^{2}}\right)} + \frac{1}{16} \, x {\left(\frac{15 \, K^{2}-50 \, K+24}{n^{4}}\right)}$& $x^5-\frac{5}{2}x^{3}\frac{ 21 K^3-126 K^2+224 K-96}{ \left(15 K^2-50 K+24\right)
                                                                                                                                        n^2}+$\\
      \quad & \quad $+x\frac{525 K^4-4200 K^3+11340 K^2-11840 K+4416}{16 \left(15 K^2-50 K+24\right) n^4}$\\
      $x^{6} - \frac{5}{4} \, x^{4} {\left(\frac{3 \, K-8}{n^{2}}\right)} + \frac{1}{16} \, x^{2} {\left(\frac{45 \, K^{2}-210 \, K+184}{n^{4}} \right)} -$ & $x^6+x^{4}\frac{70-21 K}{4 n^2}+\frac{7}{16}x^{2} \frac{15 K^2-90 K+112}{n^4}-$\\
      \quad $-\frac{15 \, K^{3}+90  \, K^{2}-120\, K}{64 \, n^{6}}$ & \quad $-\frac{3}{64}\frac{35 K^3-280 K^2+588 K-240}{n^6}$\\
      \hline
\end{tabular}
\end{table}
For illustrative purposes we present first several monic Krawtchouk polynomials $\{\widetilde{K}_{m}(x)\}$ and monic symplectic polynomials $\{G_{m}(x)\}$ for $p=1/2$ in Table ~\ref{tab:polynomials}.

\section{Asymptotics of the polynomials}
\label{sec:asympt-polyn}
In investigating the asymptotic properties of the Christoffel--Darboux kernel \eqref{eq:correlation-kernel-on-uniform-lattice}, we are led, via the analysis of equation \eqref{eq:orthonormal-BC-Christoffel-transform-explicit}, to the necessity of determining the asymptotic behavior of the polynomials $\{\widetilde{K}_m(j/n)\}$.

In this section we analyse the double-scaling limit of the Krawtchouk polynomials. Specifically, we examine the limit in which the density of nodes tends to infinity while the lattice size increases simultaneously. This corresponds to the asymptotic regime given by:
\[
n = H K, \quad m = g K, \quad j\in [-K/2,K/2], \quad K \to \infty,
\]  

where $ g$ is fixed parameter, \( 0 < g < 1 \), and the critical point is located at \( g = \frac{1}{2} \).  

Numerous studies have explored the asymptotic behavior of the Krawtchouk polynomials $\widetilde{K}_m(j/n)$ using methods such as steepest descent and the Riemann-Hilbert approach (see \cite{ISMAIL1998121}, \cite{LI2000155}, \cite{Wang_Qui_2015}). In our analysis, we aim to extend these computations to the regime where the lattice variable \( j \) remains unbounded in the double-scaling limit.  

To ensure that \( j \) is not constrained, we introduce a new variable \( \mu \) defined as  
\[
j = \frac{K}{2} (1 - 2\mu), \quad j\in [-K/2,K/2],
\]  

where \( 0 < \mu < 1 \) and \( \mu \neq \frac{1}{2} \). Now let us consider on the asymptotic expansion of
\begin{equation}
  \label{eq:monic-rescaled-krawtchouk-integral_i}
  \widetilde{K}_{m}(j/n)=(-1)^m
  \frac{m!}{n^{m}}\int_{\Gamma} (1-(1-p)z)^{K/2-j}
  (1+pz)^{K/2+j} z^{-m} 
  \frac{\dz}{2\pi i z} ,
\end{equation}
where the contour $\Gamma$ is a positive oriented closed unity loop that encloses 0, $|z|=1$. Changing the variable $t=1/(1-(1-p)z)$ one can find the integral
\begin{align}\notag
&I=\int_{\Gamma} (1-(1-p)z)^{K/2-j}
  (1+pz)^{K/2+j} z^{-m} 
  \frac{\dz}{2\pi i z} =\\ \label{Kr_int_lemma}
  &=-(-1)^{m+1}\frac{(1-p)^{m-j-K/2}}{2\pi i}\int_{\Gamma'}\frac{dt}{t(1-t)}\exp\left\{K\ln{\frac{(t-p)^{1-\mu}}{(1-t)^g}} t^{g-1}\right\},
\end{align}
where the contour $\Gamma'$ is a smooth negative -oriented circle around 1.  Now we can use steepest descent to analyze the asymptotic behavior of the integral.
\begin{prop}[see \cite{fedoryuk1977}]\label{SDP_Prop} The asymptotic behavior of the contour integral of the form $\int\limits_{\Gamma}G(t)e^{Kf(t)}\dt$ in the limit $K\to \infty$ is given by the next series
\begin{align*} \numberthis \label{SDPeq}
	&\int\limits_{\Gamma'}G(t)e^{Kf(t)}dt\bigg|_{K\to \infty}=\sum\limits_{\ell}e^{K f(t_0^\ell)}\left(G(t_0^\ell)\sqrt{\frac{2\pi}{K |f''(t_0^\ell)|}}e^{i\theta_{\text{SDP}}^\ell}+O(1/K^{3/2})\right),\\
	& \theta_{\text{SDP}}^\ell=-\frac{\alpha_\ell}{2}\pm \frac{\pi}{2},\quad \alpha_\ell=\arg (f''(t_0^\ell)),
\end{align*}
where the $\theta_{\text{SDP}}^\ell=\arg\left(\frac{dt}{d\rho}\right)_{\rho=0}$ is the tangent angle of the steepest descent path in the saddle point $t_0^\ell$, $\rho$ is the parameter along the minimax contour and the sign in the expression of $\theta_{\text{SDP}}^\ell$ defined by the steepest descent path orientation in the saddle points.
\end{prop}
In our case we have
\begin{align*}
	&G(t)=\frac{1}{t(1-t)},\\
	&f(t)=(1-\mu)\ln(t-p)+(g-1)\ln t-g\ln (1-t).
\end{align*}
The equation $f'(t)=0$ has two solutions, which we will index by $\pm$,
\begin{align*}
	&t_0^{\pm}(p)=\frac{1}{2 \mu }\left(\pm\sqrt{(-g+\mu +p)^2-4 \mu  p(1-g)}-g+\mu +p\right).\end{align*}
Limiting our consideration with the case $p=1/2$, we can simplify this expression to
\begin{align*}
	&t_0^{\pm}(1/2)\equiv t_0^\pm=\frac{\sqrt{2\mu(1-g)}}{2\mu}e^{\pm i\chi},
\end{align*}
where for $(\mu-1/2)^2\leq g(1-g)$ \footnote{Here and below we use the next definition of Heaviside $\Theta$-function
\begin{align*}
	\Theta(x)=\begin{cases}
		0,\quad x\leq 0,\\
		1,\quad x> 0.
	\end{cases}
\end{align*}
},
\begin{align*}
	\chi =\tan^{-1}\left(\frac{\sqrt{(1-g) g-\left(\mu -1/2\right)^2}}{\mu+1/2-g}\right)+\pi\Theta(g-1/2-\mu),
\end{align*}
and in the case $(\mu-1/2)^2> g(1-g)$ we have two real roots
\begin{align*}
	 t_0^\pm=\frac{1}{2\mu}\left(\pm\sqrt{(\mu-1/2)^2-g(1-g)}+(\mu+1/2-g)\right),
\end{align*}
In fact it is easy to see that in general
\begin{align*}
	t_0^\pm=\frac{1}{2\mu}\bigg(\pm\sqrt{(g+\mu -1/2)^2-2 g \mu}-((g-\mu)-1/2)\bigg).
\end{align*}
So exactly up to factor $\frac{1}{2\mu}$ and changing $\mu\to p$, roots of the action in the considered limit the same as the roots of the generalized Krawtchouk polynomials with $p\neq 1/2$, described above. 
Starting from the region of two complex-conjugate roots $1/2-\sqrt{\mu(1-\mu)}<g<1/2+\sqrt{\mu(1-\mu)}$ for a fixed $\mu$ (for a fixed $g$, this region is defined by the inequality $1/2-\sqrt{g(1-g)}<\mu<1/2+\sqrt{g(1-g)}$) \footnote{This is also a direct consequence of the self-duality of the Krawtchouk polynomials.}, let us choose the parametrization $g=1/2+d\sqrt{\mu(1-\mu)}, \;d\in[-1,1]$. Then one can easily define two branches of a rational plane curve $t_0^\pm(d;\mu)=x(d;\mu)+iy_{\pm}(d;\mu)$,
\begin{align*}
	t_0^{\pm}(d;\mu)=\frac{1}{2\mu}\bigg(\mu-d\sqrt{\mu(1-\mu)}\pm i \sqrt{	\mu(1-\mu)(1-d^2)}\bigg).
\end{align*}
It is not so hard to see that $t_0(d;\mu)$ for fixed $\mu$ form a circle with the equation
\begin{align*}
	\left(x-\frac{1}{2}\right)^2+y_{\pm}^2=\frac{(1-\mu)}{4\mu}.
\end{align*}
When we take the limit $\mu \to 0$ we have the circle with infinite radius. For $\mu\to 1$ this circle disappeares symmetrically at $(1/2,0)$ by hyperbollic scaling. There is critical point $(1,0)$ when $\mu =1/2,\, g=0$ and the unique real root goes to center of the integration. The points of the intersection of this circle and line $y=0$ are
\begin{align*}
	x=\frac{1}{2}\pm \frac{1}{2}\sqrt{\frac{1-\mu}{\mu}}.
\end{align*}
In the case of two real roots we have two real sections instead of a circle
\begin{align*}
	(g-1/2)^2>\mu(1-\mu) \; \Longleftrightarrow \; g(1-g)<(\mu-1/2)^2,
\end{align*}
so this case is realized for the parametric inequality
\begin{align*}
	&g>1/2+\sqrt{\mu(\mu-1)},\quad g<1/2-\sqrt{\mu(1-\mu)}.
\end{align*}
Using the parametrization
\begin{align}\notag
	&g(1-g)=\tilde{d}(\mu-1/2)^2,\quad 0<\tilde{d}<1,\\ \label{ineq1}
	&g=1/2\pm \sqrt{1/4-\tilde{d}(\mu-1/2)^2},
\end{align}
where the sign is consistent with the sign in the inequalities above, we can immediately write
\begin{align*}
	&t_0^+(-,\tilde{d})=\frac{1}{2\mu}\bigg((\mu-1/2)\sqrt{1-\tilde{d}}+\mu+\sqrt{1/4-\tilde{d}(\mu-1/2)^2}\bigg),\\
	&t_0^-(-,\tilde{d})=\frac{1}{2\mu}\bigg(-(\mu-1/2)\sqrt{1-\tilde{d}}+\mu+\sqrt{1/4-\tilde{d}(\mu-1/2)^2}\bigg),
\end{align*}
for minus sign, and
\begin{align*}
	&t_0^+(+,\tilde{d})=\frac{1}{2\mu}\bigg((\mu-1/2)\sqrt{1-\tilde{d}}+\mu-\sqrt{1/4-\tilde{d}(\mu-1/2)^2}\bigg),\\
	&t_0^-(+,\tilde{d})=\frac{1}{2\mu}\bigg(-(\mu-1/2)\sqrt{1-\tilde{d}}+\mu-\sqrt{1/4-\tilde{d}(\mu-1/2)^2}\bigg),
\end{align*}
for the plus sign, where the signs under the brackets denote the choice of the sign in \eqref{ineq1}. These real sections touch each other in pairs at the points $x=\frac{1}{2}\pm \frac{1}{2}\sqrt{\frac{1-\mu}{\mu}} $, when $\tilde{d}=1$.

Let us focus on the case of two complex conjugate roots. The asymptotic behavior of the polynomials $\widetilde{K}_m(j/n)$ is given by the following lemma.
\begin{lemma}
	In the double scaling regime defined by 
	\begin{align}
	&n = H K, \quad m = g K, \quad j = \frac{K}{2} (1 - 2\mu), \quad K\to \infty,
	\end{align}
	where $\mu, g\in (0,1)\backslash\{1/2\}$, and the parameters $\mu, g $ satisfy the following relation (bulk)
	\begin{align}
		(\mu-1/2)^2\leq g(1-g),
	\end{align} 
	the asymptotic of the polynomials $\widetilde{K}_m(i/n)$ is defined by
	\begin{align}\notag 
	&\widetilde{K}_{m}(j/n)=\left(\sin(\theta_{\text{SDP}}^--\hat{\delta}(j;m))+O(1/K)\right)\cdot\\ \label{K_asympt_unb}
	&\cdot\frac{m!}{n^{m}}\sqrt{\frac{2}{\pi K}}\frac{(\sqrt{1-g})^{m-K-1/2}(\sqrt{1-\mu})^{K/2+j+1/2}}{(\sqrt{g})^{m+1/2}(\sqrt{\mu})^{j-K/2-1/2}}
	\frac{ (1/2)^{m-K/2}}{\left((1-g) g-\left(\mu -\frac{1}{2}\right)^2\right)^{1/4}}.
\end{align}
The functions $\theta_{\text{SDP}}^-$ and $\hat{\delta}(j;m)$ in the expression above is defined by
\begin{align}
&\theta_{\text{SDP}}^-=-\frac{\arg(f''(t^{-}_0))}{2}+\frac{\pi}{2},\quad\hat{\delta}(j;m)=K \Im (f(t_0^+))+\arg G(t_0^+),
\end{align}
where 
\begin{align}
	&G(t)=\frac{1}{t(1-t)},\quad f(t)=(1-\mu)\ln(t-1/2)+(g-1)\ln t-g\ln (1-t),\\
	&t_0^+=\frac{1}{2\mu}\bigg(\sqrt{(g+\mu -1/2)^2-2 g \mu}-((g-\mu)-1/2)\bigg).
\end{align}
\end{lemma}
\begin{proof}
For proving the lemma we will use the Proposition \ref{SDP_Prop} for the integral \eqref{Kr_int_lemma}. One can express all main values from the equation (\ref{SDPeq})
\begin{align}
&\Re(f(t_0^\pm))=\frac{1}{2} \left(\ln\left(\frac{1-\mu }{2 (1-g)}\right)+2 g \tanh ^{-1}(1-2 g)+\mu  \ln\left(\frac{4
   \mu }{1-\mu}\right)\right),\\
   & \Im (f(t_0^\pm))=\pm \bigg((1-\mu)\gamma+(g-1)\chi -g\tau\bigg)=\pm \bigg(\zeta_1+g\zeta_2-\mu\gamma\bigg),\\
   &G(t_0^\pm)=\frac{1}{t_0^\pm(1-t_0^\pm)}=\frac{2\mu}{g}\sqrt{\frac{g}{1-g}}e^{\mp i\big[\zeta_2+2\tau\big]},\\
   & \arg f''(t_0^\pm)= \pm \bigg[
    \tan^{-1}\left(\frac{(g-1/2)\left((g-1/2)^2 - \frac{3}{4}\mu + \frac{\mu^2}{2}\right)}{\sqrt{g(1-g) - (\mu-1/2)^2} \cdot \left(\frac{\mu}{4} - (g-1/2)^2\right)}\right) + \sigma 
    +\nonumber\\
    &\quad+ \pi \cdot \Theta\left(\frac{1}{2} - \frac{\sqrt{\mu}}{2} - g\right) 
    - \pi \cdot \Theta\left(g - \left(\frac{1}{2} + \frac{\sqrt{\mu}}{2}\right)\right)
\bigg],
\end{align}
in the compact form, through the functions $\tau, \gamma, \zeta_1, \zeta_2, \sigma$, which are defined as follows. Functions $\tau$ and $\gamma$ are naturally defined as in the case of function $\chi$ as the arguments of the shifted roots of the function $f$
\begin{align*}
	 1-t_0^\pm=\sqrt{\frac{g}{2\mu}}e^{\pm i \tau}, \quad \tau=  	-\tan ^{-1}\left(\frac{\sqrt{(1-g) g-\left(\mu -1/2\right)^2}}{g+\mu-1/2}\right)-\pi \Theta(1/2-g-\mu),&\\
	t_0^\pm-1/2=\frac{\sqrt{\mu(1-\mu)}}{2\mu}e^{\pm i \gamma},\quad \gamma=  	\tan^{-1}\left(\frac{\sqrt{(1-g) g-\left(\mu -1/2\right)^2}}{1/2-g}\right)+\pi \Theta(g-1/2).&
\end{align*}
Functions $\zeta_1$ and $\zeta_2$ are the angles of these shifts relative to $\chi$,
\begin{align*}
	&\zeta_1=\gamma-\chi=	\tan^{-1}\left(\frac{\sqrt{g(1-g)-(\mu-1/2)^2}}{3/2-g-\mu}\right)+\pi\Theta(\mu+g-3/2),\\
	&\zeta_2=\chi -\tau=
	\tan^{-1}\left(\frac{\sqrt{g(1-g)-(\mu-1/2)^2}}{\mu-1/2}\right)+\pi\Theta(1/2-\mu).
\end{align*}
The $\sigma$ is defined from the denominator of $-\pi\leq \arg f''(t_0^\pm)\leq \pi$ as
\begin{align*}
	\sigma = -2\pi \cdot \Theta\left(g - \frac{1}{2} + \frac{\sqrt{(2\mu - 3)\mu(1-\mu)}}{2\sqrt{\mu - 1}}\right) + 2\pi \cdot \Theta\left(g - \frac{1}{2} - \frac{\sqrt{(2\mu - 3)\mu(1-\mu)}}{2\sqrt{\mu - 1}}\right),
\end{align*}
Now we need to define a Steepest Descent Path (SDP). First of all, lets define the lines of constant phase
\begin{align}\notag
	& \Im[f(t)-f(t_0^\pm)]=\arg \left(\frac{(t-1/2)^{1-\mu}}{(t_0^\pm-1/2)^{1-\mu}}\frac{t^{g-1}}{(t_0^\pm)^{g-1}}\frac{(1-t_0^\pm)^g}{(1-t)^g}\right)=\\
	\label{SDP_Im_mu}
	&=(1-\mu)\arg (t-1/2)+(g-1)\arg (t)-g\arg(1-t)\mp \bigg(\zeta_1+g\zeta_2-\mu\gamma\bigg)=0,
\end{align}
for $t\in \mathbb{C},\,\mu,g\in \mathbb{R},\,\mu\in (0,1),\, g\in (1/2-\sqrt{\mu(1-\mu)},1/2+\sqrt{\mu(1-\mu}))$, and
\begin{align*}
	&\Re[f(t)-f(t_0^\pm)]\leq 0,\\
	&\Re[f(t_0^\pm)]=\frac{1}{2} \left(\ln\left(\frac{1-\mu }{2 (1-g)}\right)+2 g \tanh ^{-1}(1-2 g)+\mu  \ln\left(\frac{4
   \mu }{1-\mu}\right)\right).
\end{align*}
The solutions of these equations are symmetric under complex conjugation \(t \mapsto \overline{t}\), as a consequence of the logarithmic behavior of the action \(f(t)\); their structure is illustrated in Figure~\ref{Conts}.
\begin{figure}[htbp]
  \centering
  \begin{subfigure}[b]{0.499\textwidth}
    \centering \includegraphics[width=\textwidth]{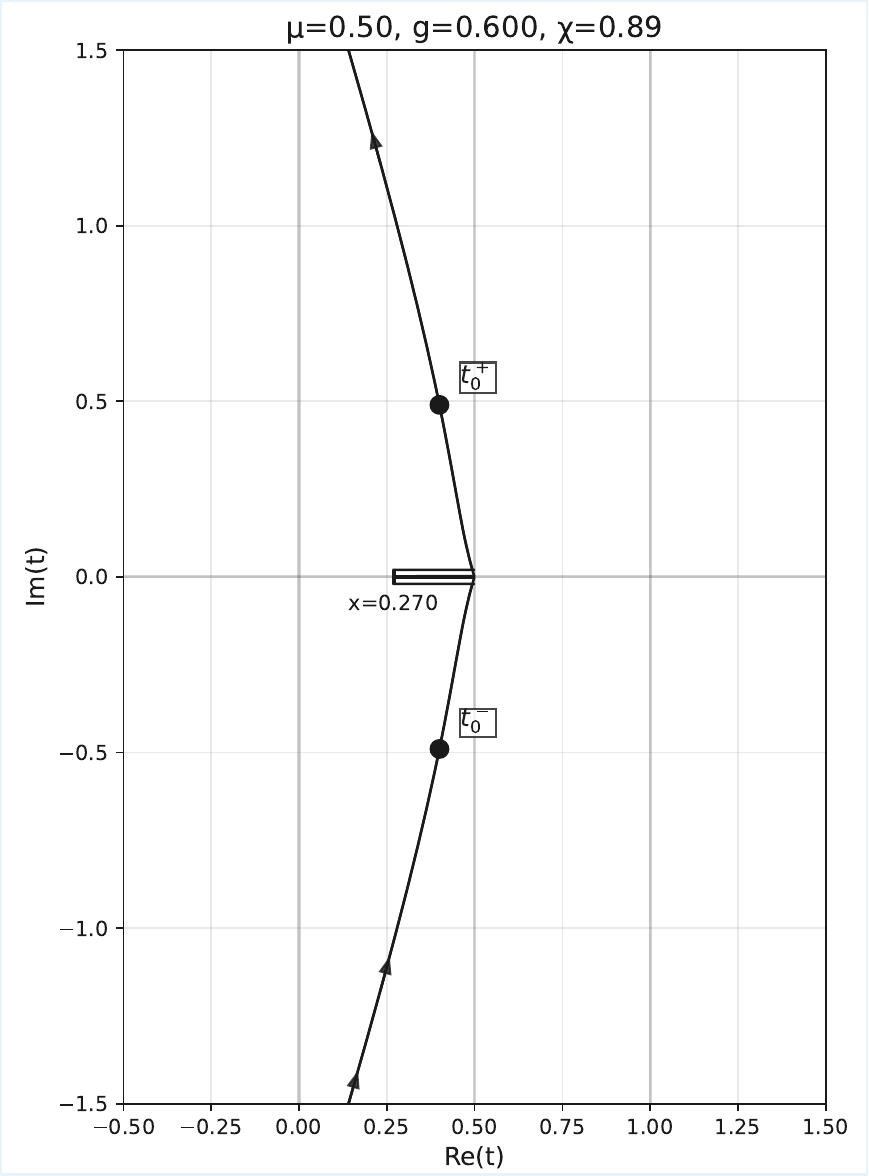}
    \caption{}
  \end{subfigure}\hspace{0.002\textwidth}%
  \begin{subfigure}[b]{0.49\textwidth}
    \centering
\includegraphics[width=\textwidth]{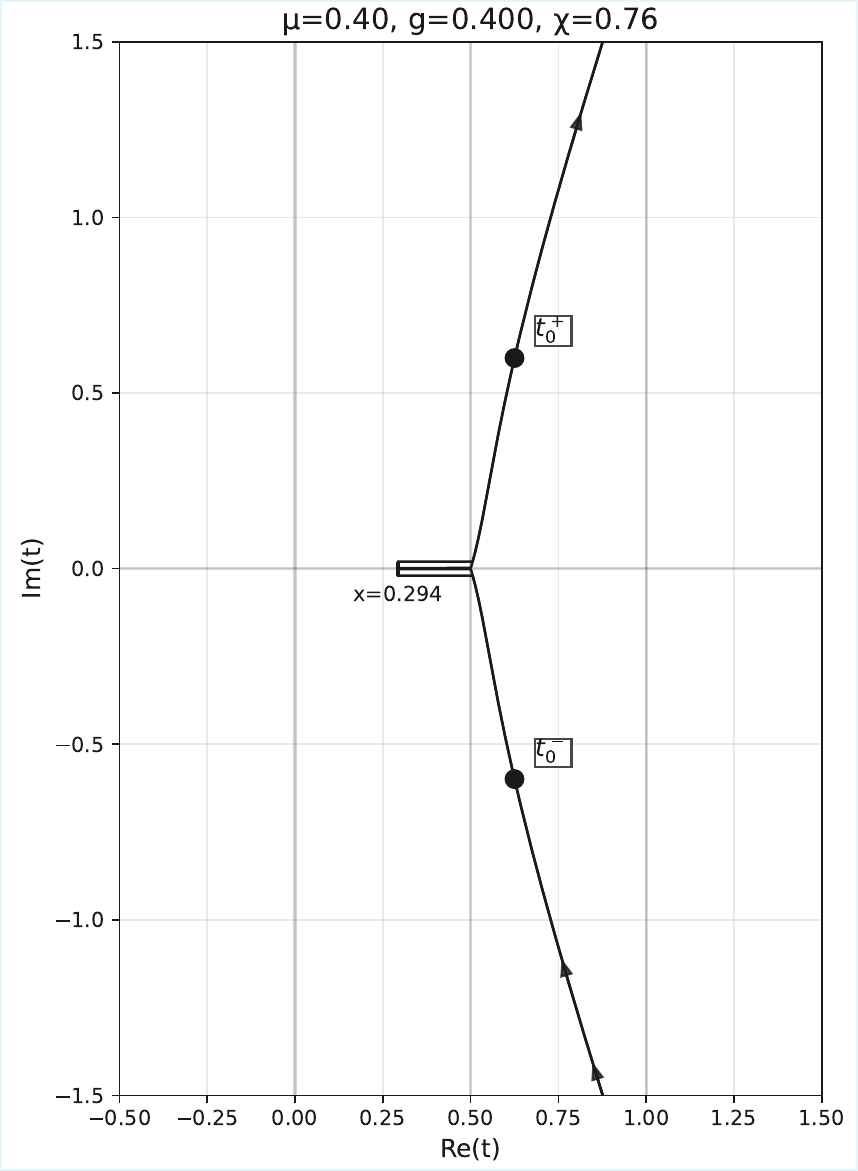}
\caption{}
  \end{subfigure}
  \caption{Steepest-descent paths in the complex plane for representative values of the parameters \(\mu\) and \(g\). Filled dots indicate the stationary (saddle) points \(t_0^\pm\) of the action \(f(t)\). Arrows show the orientation of contour traversal used in the integrals. Rectangles mark two horizontal contour segments, that should be  intersecting the real axis (\(\Im t=0\)) at the point \(x\).}
  \label{Conts}
\end{figure}
By the symmetry of the SDP branches
\begin{align*}
	\theta^+_{\text{SDP}}=-\theta_{\text{SDP}}^-+\pi,
\end{align*}
where
\begin{align}
	\theta_{\text{SDP}}^-=-\frac{\arg(f''(t^{-}_0))}{2}+\frac{\pi}{2},
\end{align}
and the sign of the second part is based on the following consideration.
Fix $\mu$ to some arbitrary value from its domain and denote the values $L:=1/2-\sqrt{(\mu-1)\mu},\,R:=1/2+\sqrt{(\mu-1)\mu}$. From the SDP's described above, in the case $L<g<1/2$ we have
\begin{align*}
	&\lim_{g\to 1/2-}\theta_{SDP}^-=\pi/2<\theta_{SDP}^-<\lim_{g\to L+}\theta_{SDP}^-=\pi, \\ 
	&\lim_{g\to 1/2-}\alpha_-=\frac{1}{2}\lim_{g\to 1/2-}\arg(f''(t_0^-))=\arg (\mu^2)=0.
\end{align*}
Then in this region of $g$ it follows
\begin{align*}
	\theta_{\text{SDP}}^-=- \frac{\arg(f''(t_0^-))}{2}+\frac{\pi}{2}, \quad L<g<1/2.
\end{align*}
In the case $1/2<g<R$ we have
\begin{align*}
	\lim_{g\to R-}\theta_{SDP}^-=0<\theta_{SDP}^-<\lim_{g\to 1/2+}\theta_{SDP}^-=\pi/2,
\end{align*}
so
\begin{align*}
	&\theta_{\text{SDP}}^-=- \frac{\arg(f''(t_0^-))}{2}+\frac{\pi}{2},\quad 1/2<g<R\\
	&\theta_{\text{SDP}}^-=\pi/2,\quad g=1/2.
\end{align*}
 
We can now construct the asymptotic of the initial integral
\begin{align*}
	&I=(-1)^{m}\frac{(1-p)^{m-j-K/2}}{2\pi i}\int_{\Gamma'}g(t)e^{Kf(t)}dt\bigg|_{K\sim \infty}=\\
  &= (-1)^{m}(1/2)^{m-K/2}\frac{(\sqrt{1-g})^{m-1/2}}{(\sqrt{g})^{m+1/2}}\left(\sqrt{\frac{\mu }{1-\mu }}\right)^{K/2-j}\left( 
   \sqrt{\frac{1-\mu}{1-g}}\right)^K \cdot\\
   &\cdot \sqrt{\frac{2}{\pi K}}\left(\frac{ (1-\mu ) \mu }{(1-g) g-\left(\mu -\frac{1}{2}\right)^2}\right)^{1/4} \left(\sin(\theta_{\text{SDP}}^--\hat{\delta}(j;m))+O(1/K)\right),
  \end{align*}
  where:
  \begin{align*}
  	\hat{\delta}(j;m)=(m-1)\zeta_2+K\zeta_1+(j-K/2)\gamma-2\tau.
  \end{align*}
  Using the fact that $K$ is even we can continue the simplification of $\hat{\delta}$ by defining new variables
 	\begin{align*}
 		&\hat{\zeta}_1=\tan^{-1}\left(\frac{\sqrt{g(1-g)-(\mu-1/2)^2}}{3/2-g-\mu}\right),\\
 		& \hat{\tau}=\tan ^{-1}\left(\frac{\sqrt{(1-g) g-\left(\mu -1/2\right)^2}}{g+\mu-1/2}\right),
 	\end{align*}
 	in which the definition of $\hat{\delta}$ depends only on the location of $\mu,g$ relative to $1/2$ value
 	\begin{align}\label{delta_mu_def}
 		\hat{\delta}(j;m)=(m-1)\zeta_2+K\hat{\zeta_1}-K\mu\gamma+2\hat{\tau},
 	\end{align}
 	and after further simplification of the integral asymptotic we have
 	\begin{align*}
 	&I=(-1)^{m}\frac{(1-p)^{m-j-K/2}}{2\pi i}\int_{\Gamma'}G(t)e^{Kf(t)}dt\bigg|_{K\sim \infty}=\left(\sin(\theta_{\text{SDP}}^--\hat{\delta}(j;m))+O(1/K)\right)\cdot\\
 		&= \frac{(\sqrt{1-g})^{m-K-1/2}(\sqrt{1-\mu})^{K/2+j+1/2}}{(\sqrt{g})^{m+1/2}(\sqrt{\mu})^{j-K/2-1/2}}\sqrt{\frac{2}{\pi K}}\frac{ (-1)^{m}(1/2)^{m-K/2}}{\left((1-g) g-\left(\mu -\frac{1}{2}\right)^2\right)^{1/4}}.
 	\end{align*}
Substituting this expression to \eqref{eq:monic-rescaled-krawtchouk-integral_i}, we end the proof of the lemma.
\end{proof}
\begin{cor}
Using the equation (\ref{eq:orthonormal-BC-Christoffel-transform-explicit}) we can find the asymptotic behavior for $G_m(i/n)$ in the double scaling regime $m=gK$, $n=HK, \;K\to \infty$ for even $m$,
\begin{align}\notag
	&G_m(i/n)=\frac{1}{i^2}n^2\left(\widetilde{K}_{m+2}(i/n)-
\frac{\widetilde{K}_{m+2}(0)}{\widetilde{K}_{m}(0)}\widetilde{K}_{m}(i/n)\right)=\\ \notag
&=\frac{(m+2)!}{n^m i^2}\sqrt{\frac{2}{\pi K}}\frac{(\sqrt{1-g})^{m-K+3/2}(\sqrt{1-\mu})^{K/2+i+1/2}}{(\sqrt{g})^{m+5/2}(\sqrt{\mu})^{i-K/2-1/2}}\frac{ (1/2)^{m+2-K/2}}{\left((1-g) g-\left(\mu -\frac{1}{2}\right)^2\right)^{1/4} }\cdot\\ \label{G_m_Asymptotic_unb}
&\cdot \bigg(\sin \big(\theta_{SDP}^--\hat{\delta}(i;m+2)\big)+\frac{(K-m)}{m+2}\frac{g}{1-g}\sin\big(\theta_{SDP}^--\hat{\delta}(i;m)\big)+O(1/K)\bigg).
\end{align}
\end{cor}
\section{Limiting behavior of the correlation kernel}
  \label{sec:limit-behav-corr}
The main goal of this section is to give a proof of the Theorem \ref{thm:sine-limit}. To do that, we compute the asymptotic of the correlation kernel (\ref{eq:correlation-kernel-on-uniform-lattice}) in following regime of the symplectic polynomials
\begin{align}
	m=gK,\, K=2n+2k,\, n=HK,
\end{align}
with $K\to \infty$ and fixed $g,H$, and $m$ is the number of polynomial.
\begin{proof}[Proof of Theorem \ref{thm:sine-limit}.]
The equation (\ref{G_m_Asymptotic_unb}) for $m=2n$ has the following form
\begin{align}\notag
	&G_{2n}(i/n)=g_{2n}(i/n)\sqrt{\Lambda_{2n}}=\frac{1}{(i/n)^2}\frac{(2n+2)!}{n^{2n+2}}F_{2n}(i)\cdot \\ \label{sin_g_m_asymp}
	&\bigg(\sin \big(\theta_{SDP}^--\hat{\delta}(i;2n+2)\big)+\sin\big(\theta_{SDP}^--\hat{\delta}(i;2n)\big)+O(1/K)\bigg),
\end{align}
where $F_{2n}(i)$ is defined as
\begin{align*}
	&F_{2n}(i)=\frac{(\sqrt{1-2H})^{2n-K+3/2}(\sqrt{1-\mu})^{K/2+i+1/2}}{(\sqrt{2H})^{2n+5/2}(\sqrt{\mu})^{i-K/2-1/2}}\sqrt{\frac{2}{\pi K}}\frac{ (1/2)^{2n+2-K/2}}{\left((1-2H) 2H-\left(\mu -\frac{1}{2}\right)^2\right)^{1/4} },
\end{align*}
with $g=2H$ and in general $0<H<1/2$ with $H=1/4$ for $n=k$ (square Young tableaux).
Now we substitute asymptotic of the polynomials into the kernel to demonstrate its convergence to the discrete sine-kernel
\begin{align*}	
  &\mathcal{K}(u,v)=2\frac{\varkappa_{2n-2}}{\varkappa_{2n}}
  \frac{g_{2n}(u)g_{2n-2}(v)-g_{2n-2}(u)g_{2n}(v)}{u^{2}-v^{2}}
  \sqrt{\widetilde{W}(u^{2}) \widetilde{W}(v^{2})}=\\
  &=\frac{(1/2)^{K-1}}{\Lambda_{2n-2}}\frac{G_{2n}(u)G_{2n-2}(v)-G_{2n-2}(u)G_{2n}(v)}{u^2-v^2}uv\sqrt{\binom{K}{K/2+nu}\binom{K}{K/2+nv}}.
\end{align*}
In fact, substituting $nu=i,vn=j$ and \eqref{rec1} we have
\begin{align}\notag
	&\frac{(1/2)^{K-1}}{\Lambda_{2n-2}}\frac{G_{2n}(u)G_{2n-2}(v)-G_{2n-2}(u)G_{2n}(v)}{u^2-v^2}uv\sqrt{\binom{K}{K/2+nu}\binom{K}{K/2+nv}}= \\ \notag
	&=\frac{(1/2)^{K-1}\beta_{2n-1}}{S_{2n-1}L_{2n-1}}\frac{F_{2n}(i)F_{2n-2}(j)}{(u^2-v^2)uv}\frac{(2n+2)!(2n!)}{n^{4n+2}}\sqrt{\binom{K}{K/2+i}\binom{K}{K/2+j}}\cdot \\ \notag
	&\cdot\Bigg(\bigg(\sin \big(\theta^-_{\text{SDP}}(i;2n+2)-\hat{\delta}(i;2n+2)\big)+\sin\big(\theta^-_{\text{SDP}}(i;2n)-\hat{\delta}(i;2n)\big)\bigg)\cdot \\ \label{CD_trig}
	&\cdot \bigg(\sin \big(\theta^-_{\text{SDP}}(j;2n)-\hat{\delta}(j;2n)\big)+\sin\big(\theta^-_{\text{SDP}}(j;2n-2)-\hat{\delta}(j;2n-2)\big)\bigg)-[ij]\bigg)+O(1/K)\Bigg),
\end{align}
where $[ij]$ is the last summand with the transposition $i\leftrightarrow j,$ and we use the fact
\begin{align*}
	\frac{F_{2n-2}(i)F_{2n}(j)}{F_{2n}(i)F_{2n-2}(j)}=1.
\end{align*}
Now using the notation
\begin{align*}
	\lambda(i;2n)=\theta^-_{\text{SDP}}(i;2n)-\hat{\delta}(i;2n).
\end{align*}
we aim to derive the asymptotic of $\lambda$ in the bulk regime for $-1/2<x<1/2$
\begin{align*}
&u=\frac{1}{H}\left(x+\frac{i'}{K}\right),\quad v=\frac{1}{H}\left(x+\frac{j'}{K}\right),\\
	&i=un=xK+i'=\frac{K}{2}\bigg(1-2\mu(i')\bigg),\quad j=vn=x K+j',
\end{align*}
with $\mu(i')=-\frac{1}{2}\left(2x+2i'/K-1\right)$.
Let us determine the domain of the pieswise functions~\\ $\gamma,\hat{\zeta}_1,\hat{\zeta}_2,\hat{\tau}, \arg f''(t_0^-))$. 
We need to choose the interval between the critical points of the action. So we can expand the piecewise functions for example in  $g>1/2,\, \mu>1/2$ ($-K/2<i<0$)
\begin{align*}
	&\gamma=\tan^{-1}\left(\frac{\sqrt{(1-g) g-\left(\mu -\frac{1}{2}\right)^2}}{1/2-g}\right)+\pi,\quad\zeta_2=\tan ^{-1}\left(\frac{\sqrt{(1-g) g-\left(\mu -\frac{1}{2}\right)^2}}{\mu -\frac{1}{2}}\right),\\
	&\hat{\zeta}_1=\tan^{-1}\left(\frac{\sqrt{g(1-g)-(\mu-1/2)^2}}{3/2-g-\mu}\right),\quad \hat{\tau}=\tan ^{-1}\left(\frac{\sqrt{(1-g) g-\left(\mu -\frac{1}{2}\right)^2}}{g+\mu-1/2}\right),\\
 		&\arg f''(t_0^-)=\tan ^{-1}\left(\frac{(2 g-1) (2 \mu -1)}{\sqrt{-4 (g-1) g-(1-2 \mu )^2}}\right)+\\
 		&+2 \tan
   ^{-1}\left(\frac{-\frac{\mu }{g-1/2}+2 g-1}{\sqrt{-4 (g-1) g-(1-2 \mu )^2}}\right)+\pi.
\end{align*}
In the other domains of parameter space such as $g>1/2,\mu<1/2$ we will get the same formulas up to $\pm \pi$. These contributions will cancel each other out in final answer. 
Next we derive the asymptotic of trigonometric part of the kernel accurate up to $O(1/K)$. It is obvious that the shift $m\to m+2$ generate the shift $g\to g+2/K$, so, by the definition of $\lambda(i;2n)$
\begin{align*}
\lambda(i;m+2)=\lambda(i;m)-2\zeta_2-2\frac{\partial \zeta_2}{\partial g}g-\frac{\partial \hat{\zeta}_1}{\partial g}2+2\mu\frac{\partial \gamma}{\partial g}+O(1/K).
    \end{align*} 
    Denoting:
    \begin{align*}
    	&\beta=\left(-2\zeta_2-\frac{\partial \hat{\zeta}_1}{\partial g}2+\frac{\partial \gamma}{\partial g}\right)=2 \cot ^{-1}\left(\frac{x}{\sqrt{2H-4H^2-x^2}}\right)+O(1/K),\\
    	&\gamma_i=\lambda(i;2n),
    \end{align*}
  we can reduce all trigonometric part of the CD kernel by the simple calculation to:
 \begin{align*}
 	&\bigg(\sin \lambda(i;2n+2)+\sin\lambda(i;2n)\bigg)\cdot \bigg(\sin \lambda(j;2n)+\sin\lambda(j;2n-2)\bigg)-[ij]=\\
 	&=\bigg(\sin \gamma_i+\sin\gamma_i\cos\beta+\cos\gamma_i\sin\beta\bigg)\bigg(\sin \gamma_j+\sin\gamma_j\cos\beta-\cos\gamma_j\sin\beta\bigg)-\\&-[ij]+O(1/K)= -2\sin \beta (1+\cos \beta)\sin(\gamma_i-\gamma_j),
 \end{align*}
 where:
 \begin{align*}
 	&-2\sin \beta (1+\cos \beta)=\frac{4 x^3 \sqrt{-g^2+g-x^2}}{(g-1)^2 g^2}\bigg(1+O\left(\frac{1}{K}\right)\bigg),\\
 	&\gamma_i-\gamma_j=-(i'-j')\left(\pi+\tan^{-1}\left(\frac{2\sqrt{2H-4H^2-x^2}}{1-4H}\right)\right)\bigg(1+O\left(\frac{1}{K}\right)\bigg).
 \end{align*}

The normalization of the kernel is not important because it can be cancelled out by the renormalization of the polynomials. Therefore we have established the pointwise convergence of the Christoffel--Darboux kernel to the discrete sine-kernel
\begin{align} \label{Sine_Ans}
	\lim\limits_{n\to \infty}\frac{\mathcal{K}\left(\frac{1}{H}\big(x+\frac{i'}{K}\big),\frac{1}{H}\big(x+\frac{j'}{K}\big)\right)}{\mathcal{K}(x/H,x/H)}=\frac{\sin (\pi \rho(x)(i'-j'))}{\pi \rho(x)(i'-j')},
\end{align}
with the limit density
\begin{align} \label{Dens_Ans}
	\rho(x)=\frac{1}{\pi}\cos^{-1}\left(\frac{1-4H}{\sqrt{1-4x^2}}\right),
\end{align}
which ends the proof of Theorem \ref{thm:sine-limit}.
\end{proof}

The limit density coincides with the result for the limit shape of symplectic diagrams obtained in \cite[Thm. 5.1]{nazarov2024skew} after substituting $c=\lim\frac{k}{n}=\frac{1-2H}{2H}$.

In Figure \ref{fig:Sin1} we present the comparison of the correlation kernel from sampling multiple diagrams by Proctor's algorithm to the discrete sine kernel. We see good agreement of the simulation results with the asymptotic expression even for the relatively small values of $n=50, k=100$. 

\begin{figure}
  \begin{center}
  \includegraphics[width=\linewidth]{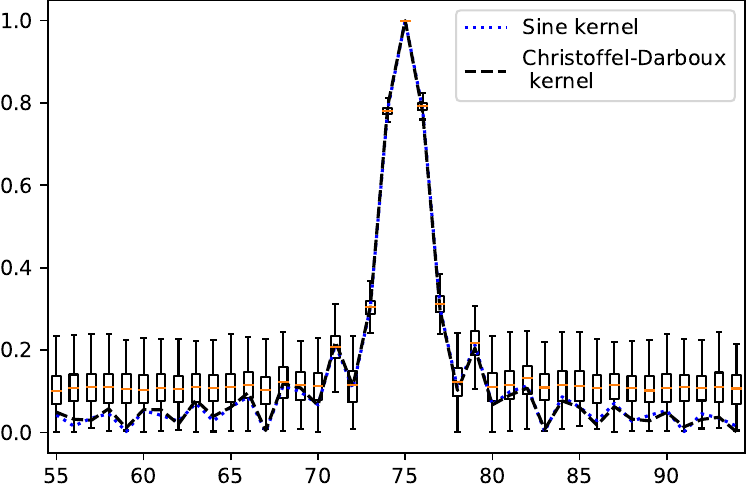}
  \end{center}
  \caption{Plot of the ratio $\frac{\mathcal{K}(i,j)}{\mathcal{K}(i,i)}$ for $i=75$ obtained by sampling $500$ samples, each of $10000$ random Young diagrams inside of $n\times k$ box with $n=50, k=100$ using Proctor's algorithm ({\it box and whiskers plot}) and its comparison to the same ratio for the Christoffel-Darboux kernel \eqref{eq:christoffel-darboux-kernel-on-square-lattice} ({\it dashed black line}) and discrete sine kernel ({\it dotted blue}) \eqref{Sine_Ans}, \eqref{Dens_Ans}
  }  \label{fig:Sin1}
\end{figure}
\section{Conclusion and future work}
In this paper, we have investigated the local fluctuations of random Young diagrams arising from the skew Howe duality for symplectic groups $\Sp_{2n} \times \Sp_{2k}$. The probability measure on diagrams, proportional to the ratio of the dimension of the irreducible $\Sp_{2n} \times \Sp_{2k}$-representation to the dimension of the exterior algebra $\bigwedge\left(\mathbb{C}^{2n}\otimes\mathbb{C}^{k}\right)$, was expressed as a determinantal point process governed by a correlation kernel derived from semiclassical orthogonal polynomials. Unlike the $\GL_{n}$ case, where free-fermionic methods apply, the symplectic setting required a Christoffel transformation of Krawtchouk polynomials to construct the relevant orthogonal polynomials. It is an open question how to apply the free-fermionic representation \cite{Baker_1996} to the skew $\Sp_{2n} \times \Sp_{2k}$ case. In particular, it is not clear how the tau functions for $d$-connections can be constructed \cite{10.1215/S0012-7094-06-13433-6,arinkin2009tau}. 

By analyzing the integral representation of these polynomials in the double-scaling limit $n, k \to \infty$ with $n/k$ fixed, we have demonstrated that the correlation kernel converges to the discrete sine kernel in the bulk regime. This universality mirrors results for general linear groups \cite{Borodin_2003, Borodin_2017} but required novel techniques due to the absence of free-fermionic structures.

Our findings align with the limit shape results in \cite{nazarov2024skew} and extend the understanding of fluctuations to symplectic symmetry classes.

The following open problems and directions naturally follow from the present work:
\begin{enumerate}
  \item \textbf{Free-fermionic representation for the skew $\Sp_{2n}\times\Sp_{2k}$ case.} Clarify whether and how the free-fermionic formalism used in the $\GL_{n}$ setting can be adapted to the symplectic skew Howe duality. See \cite{Baker_1996} for related representations.

  \item \textbf{Tau functions for $d$-connections.} Construct the appropriate tau functions in the symplectic skew-duality setup and explain their relation to the determinantal kernel obtained here \cite{10.1215/S0012-7094-06-13433-6,arinkin2009tau}.

  \item \textbf{Edge asymptotics and connections to discrete Painlev\'e equations.} Study edge scaling regimes (soft and hard edges) to determine whether discrete analogues of Airy/Tracy--Widom laws or Painlev\'e{}-type distributions arise, and relate these asymptotics to discrete Painlev\'e equations where applicable.
\end{enumerate}
\section{Appendix}
The goal of this appendix is to prove the relation for the second term in the definition (\ref{eq:orthonormal-BC-Christoffel-transform-explicit}).
\begin{lemma} Polynomials \(\widetilde{K}_m(x)\), defined by \eqref{shifted-monic-krawtchouk-hyp-rep}, satisfy the following relation:
\begin{align}\notag
\frac{\widetilde{K}_{m+2}(0)}{\widetilde{K}_{m}(0)}&=-\frac{(K-m)(m+1)}{4n^2}=\\ \label{eq:p1_2_term}
&=\frac{(m+1) (m+2) \binom{K}{m+2} \, _2F_1\left(-\frac{K}{2},-m-2;-K;2\right)}{4 n^2 \binom{K}{m} \, _2F_1\left(-\frac{K}{2},-m;-K;2\right)}. 
\end{align}
\end{lemma}
\begin{proof}
First of all we need to use the following transformation for $n\in \mathbb{N}$ (see (3.19) in \cite{Hannah2013IdentitiesFT})
\begin{align}
  \,_2F_1(-n,b,c,z)=\frac{(b)_n}{(c)_n}(-z)^n \,_2F_1(-n,1-c-n,1-b-n,1/z).
  \label{Hyp_z_inverse}
\end{align}
So we get
\begin{align*}
	&\,_2F_1(-m-2,-K/2,-K,2)=\\
	&=\frac{(-K/2)_{m+2}}{(-K)_{m+2}}(-1)^{m+2} 2^{m+2}\,_2F_1(-m-2,K-m-1,k/2-m-1,1/2).
\end{align*}
Now we can use the standard series for the Gauss hypergeometric function
\begin{align*}
	&\,_2F_1(-m-2,K-m-1,k/2-m-1,1/2)=\sum\limits_{i=0}^{m+2}\frac{(-1)^i}{2^i}\binom{m+2}{i}\frac{(K-m-1)_i}{(K/2-m-1)_i}=\\
	&=\frac{\left((-1)^m+1\right) \Gamma \left(\frac{m+3}{2}\right) \Gamma \left(\frac{1}{2} (-K+m+2)\right)}{2
   \sqrt{\pi } \Gamma \left(-\frac{K}{2}+m+2\right)}=\\
   &=-\frac{1/4(m+1)(K-m)}{(-K/2+m+1)(-K/2+m)}\,_2F_1(-m,K-m+1,K/2-m+1,1/2),
\end{align*} 
where the last sum can be computed explicitly by the Gauss's second summation theorem
\begin{align*}
	\,_2F_1(a,b,1/2(a+b+1),1/2)=\frac{\Gamma(1/2)\Gamma(1/2(a+b+1))}{\Gamma(1/2(1+a))\Gamma(1/2(1+b))}.
\end{align*}
Finally we have
\begin{align*}
	\,_2F_1(-m-2,-K/2,-K,2)=\frac{(m+1)}{(-K+m+1)}\,_2F_1(-m,-K/2,-K,2).
\end{align*}
Substituting this into the original expression, we immediately come to the desired proof.
\end{proof}~
\\\\\textbf{Aknowlegments} We thank Travis Scrimshaw for reading the first draft and pointing out the deficiencies in it. We thank Anton Dzhamay for useful discussions. 
\\\\\textbf{Funding} The study was carried out with the financial support of the Ministry of Science and Higher Education of the Russian Federation in the framework of a scientific project under agreement No. 075-15-2025-013.
\\\\\textbf{Data Availability} All data generated or analysed during this study are contained in this document.
\section*{Statements and Declarations}~\\
\textbf{Conflict of interest} On behalf of all authors, the corresponding author states that there is no conflict of
interest.

\bibliography{lit.bib}
 
\end{document}